\pgfplotsset{compat=1.15}
\newtheorem{thm}{Theorem}[section]
\newtheorem{lem}[thm]{Lemma}
\newtheorem{thm-con}[thm]{Theorem-Conjecture}
\numberwithin{equation}{section}
\theoremstyle{definition}
\newtheorem{defn}[thm]{Definition}
\newtheorem{rmk}[thm]{Remark}
\newtheorem{notation}[thm]{ Notation}
\newtheorem{exmp}[thm]{Example}
\newtheorem{set}[thm]{Set-up}
\DeclareMathOperator{\del}{del}
\DeclareMathOperator{\link}{link}
\DeclareMathOperator{\Tor}{Tor}
\DeclareMathOperator{\reg}{reg}
\newcommand{\bk}{\ensuremath{{\bf k}}}
\begin{document}
	
\title[Powers of vertex cover ideals of Simplicial Trees]{Powers of vertex cover ideals of Simplicial Trees}
\author{Bijender}
\email{2019rma0011@iitjammu.ac.in}
\author[Ajay Kumar]{Ajay Kumar}
\email{ajay.kumar@iitjammu.ac.in}
\author[Rajiv Kumar]{Rajiv Kumar}
\email{rajiv.kumar@iitjammu.ac.in}
\address{Department of Mathematics, Indian Institute of Technology Jammu, J\&K, India - 181221.}
\date{\today}

\subjclass[2020]{Primary 13C14, 13D02, 05E40}

\keywords{Componentwise linear, vertex decomposable, regularity, sequentially Cohen-Macaulay, vertex cover ideal, symbolic powers}

\maketitle
\begin{abstract}
   In $2011$, Herzog, Hibi, and Ohsugi conjectured that if $J$ is the cover ideal of a chordal graph, then $J^s$ is componentwise linear for all $s \ge 1.$ In 2022, H\`a  and Tuyl considered objects more general than chordal graphs and posed the following  problem:  Let $J(\Delta)$ be the cover ideal of a simplicial tree $\Delta.$ Is it true that  $J(\Delta)^s$ is componentwise linear for
   all $s \geq 1?$ In this article, we give an affirmative answer to this problem.
\end{abstract}
\section{Introduction}
The study of homological and combinatorial properties of monomial ideals is a  beautiful area with rich interaction between combinatorics and commutative algebra. In particular,  the class of  squarefree monomial ideals is strongly connected to combinatorics. A basic example of a squarefree monomial ideal is the edge ideal of a simple hypergraph $\mathcal{H}.$  Recall that a simple hypergraph $\mathcal{H}$  is a pair $(\mathcal{V},\mathcal{E})$, where $\mathcal{V}$ be a finite set and $\mathcal{E}$ be a collection of non-empty subsets of $\mathcal{V}$ such that for any $F,G\in \mathcal{E}$ with $F \neq G$, we have $F\not\subset G.$ The \emph{edge ideal} of $\mathcal{H}$, denoted by $I(\mathcal{H})$, is the squarefree monomial ideal in the polynomial ring $S = \mathbb{K}[x_1,\dots,x_n]$ over a field $\mathbb{K}$ given by $
I(\mathcal{H}) = \langle  \prod\limits_{x \in F}x: F\in \mathcal{E} \rangle.$ The Alexander dual of $I(\mathcal{H})$, is called the\textit{ vertex cover ideal} of $\mathcal{H}$ denoted as $I(\mathcal{H})^{\vee}=J(\mathcal{H}).$ The assignment $\mathcal{H} \leftrightarrow I(\mathcal{H})$ gives a one-to-one correspondence between the family of hypergraphs and the family
of square-free monomial ideals. Faridi \cite{Faridi05} introduced the notion of edge ideal $I(\mathcal{H})$ of a hypergraph $\mathcal{H}$ in terms of the facet ideal of a simplicial complex.  Let $\Delta$ be a simplicial complex on the vertex set $\mathcal{V}=\{x_1,x_2,\ldots,x_n\},$ then one can associate a square-free monomial ideal 
$I(\Delta)=\langle \prod\limits_{x \in F}x: F \in  \mathcal{F}(\Delta)\rangle$, called the facet ideal of $\Delta$, where $\mathcal{F}(\Delta)$ denotes the set of all facets of $\Delta.$ This correspondence is very effective in the study of powers of squarefree monomial ideals. To each simplicial complex $\Delta$, we can associate a hypergarph, namely, $\mathcal{H}(\Delta) = (\mathcal{V},\mathcal{F}(\Delta)).$ Observe that $I(\mathcal{H}(\Delta)) =
I(\Delta).$ The \emph{vertex cover ideal} of $\Delta$ is defined as $J(\Delta) = J(\mathcal{H}(\Delta))=I(\mathcal{H}(\Delta))^{\vee}.$ Using this approach one can view any squarefree monomial ideal as the facet set of a simplicial complex or the edge ideal of a hypergraph. Another correspondence between squarefree monomial ideals and simplicial complexes is called the Stanley-Reisner
correspondence. Let $\Delta$ be a simplicial complex on the vertex set $V=\{x_1,\dots,x_n\}.$ Then the \emph{Stanley-Reisner ideal} of $\Delta$, denoted as $I_{\Delta}$, is defined as
$
I_{\Delta} = \langle x_{i_1}\cdots x_{i_s}:\{x_{i_1},\dots,x_{i_s}\}\notin \Delta\rangle.
$   A simplicial complex is said to be (sequentially) Cohen-Macaulay if the quotient ring $S/I_{\Delta}$ has the same property.  Eagon and 
Reiner \cite{eagon} proved that a simplicial complex $\Delta$ is Cohen-Macaulay if and only if $I_{\Delta}^{\vee}$ has a linear resolution. Ideals having a linear resolution are considered to be computationally simple.   Herzog and Hibi \cite{HHibi}  introduced a new class of ideals called componentwise linear ideals. A graded ideal $I$ is said to be componentwise linear if the ideal $I_{<j>}$ generated by the homogeneous elements of degree $j$ in $I$ has a linear resolution for all $j \geq 0.$ Herzog-Hibi \cite{HHibi} generalized  the result of Eagon and Reiner by showing that a simplicial complex $\Delta$ is sequentially Cohen-Macaulay if and only if the ideal $I_{\Delta}^{\vee}$ is componentwise linear. It is a known fact that over the field of characteristic $0$, a graded $I$ in a polynomial ring $S$ is Koszul if and only if it is componentwise linear. Koszul modules were introduced Herzog and Iyengar in \cite{HIS}. Koszul modules have significant applications in areas of commutative algebra and algebraic geometry.  

The problem of finding classes of componentwise linear ideals  is of great interest and has been studied extensively by many authors in the literature (see \cite{Er2,EQ2019,HHO2011,Mohammadi,FM,MMS}). In particular, one would like to find some combinatorial conditions on a graph $G$ such that powers of its cover ideal are componentwise linear. Francisco and Tuyl  \cite{fv} proved that the vertex cover ideal of a chordal graph is componentwise linear. Herzog, Hibi and Ohsugi  \cite{HHO2011} studied the powers of the cover ideals associated to graphs and shown that all powers of the vertex cover ideal of a Cohen-Macaulay chordal graph are componentwise linear. Also, they  proposed a conjecture that all powers of the vertex cover ideal of a chordal graph are componentwise linear. There has been a very little progress made on this conjecture except for some classes of graphs like  generalized star graphs, Cohen-Macaulay chordal graphs, trees, Cohen-Macaulay bipartite and cactus  graphs, $(C_4,2K_2)$-free graphs (see \cite{Er2, HHO2011,KumarAR,Mohammadi,FM,MMS}). H\`a  and Tuyl  \cite{taivan} posed the following question: Let $J(\Delta)$ be the cover ideal of a simplicial tree $\Delta.$ Is it true that  $J(\Delta)^s$ is componentwise linear for
all $s \geq 1?$ In this article, we give an affirmative answer to this question, and hence establish the generalization of Herzog, Hibi and Ohsugi's conjecture for the special case of simplicial trees.

\begin{thm}\label{Th-1stMainResult}
	Let $\Delta$ be a simplicial tree. Then for every $k \in \mathbb{N}_{>0}$, $J(\Delta)^k$ has linear quotients, and hence it is componentwise linear.
\end{thm}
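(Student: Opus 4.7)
The plan is to argue by induction on the number of facets of $\Delta$. For the base case, in which $\Delta$ consists of a single facet $F$, the vertex cover ideal $J(\Delta) = (x_i : x_i \in F)$ is generated by a subset of the variables, so every power admits an obvious linear-quotients order (reverse-lex on the minimal generators of $J(\Delta)^k$ works).

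For the inductive step I would invoke the key structural feature of simplicial trees: every simplicial tree has a \emph{good leaf}, that is, a leaf $F$ whose intersections $\{F \cap G : G \in \mathcal{F}(\Delta),\ G \neq F\}$ are totally ordered by inclusion (Faridi; see also the H\`a--Tuyl paper cited above). Fix such a good leaf $F$ and a free vertex $x \in F$, and set $\Delta' = \Delta \setminus \{F\}$; then $\Delta'$ is again a simplicial tree with one fewer facet, and by induction $J(\Delta')^k$ has linear quotients. The goal is to assemble a linear-quotients order on the minimal generators of $J(\Delta)^k$ out of the inductive order on $J(\Delta')^k$ together with a second order coming from deletion of $x$. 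Concretely, I would partition the generators of $J(\Delta)^k$ by whether they are divisible by $x$, placing those divisible by $x$ first, and within each block impose a lexicographic order guided by the chain $F \cap G_1 \subseteq F \cap G_2 \subseteq \cdots$ at the good leaf.

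The technical heart is then the verification of the linear-quotients condition: each colon ideal $(v_1, \ldots, v_{i-1}) : v_i$ must be generated by variables. For a generator $u$ not divisible by $x$, the good-leaf chain yields, for each cover-factor $C_j$ of $u$ meeting $F$ in some $z \in F \setminus \{x\}$, a canonical earlier generator obtained by swapping $z$ for $x$ in $C_j$; the swap stays inside the cover ideal precisely because any facet of $\Delta'$ that had forced $z$ into $C_j$ is witnessed along the chain by a vertex still present in $C_j$ after the swap. For generators divisible by $x$, the colon computations are driven by the reduction of $J(\Delta)^k$ modulo $x$, which is controlled by the inductively ordered $J(\Delta')^k$ together with a related smaller cover ideal.

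The main obstacle I anticipate is the combinatorial bookkeeping when $u = u_{C_1} \cdots u_{C_k}$ mixes several ``types'' of cover-factors simultaneously---some hitting $F$ through $x$ and others through distinct vertices of $F \setminus \{x\}$---so that a single colon computation may need to coordinate swaps across several of the factors, rather than a single one. I expect that surmounting this will require an auxiliary lemma giving explicit variable generators for colon ideals of the form $J(\Delta) : m$ for suitable partial-cover monomials $m$, together with a standard recursive identity that lifts such a description to $J(\Delta)^k : m$ by induction on $k$. Once those colon descriptions are in place, the chain condition of the good leaf should supply, case by case, a single variable witness for every colon pair in the proposed order, completing the verification.
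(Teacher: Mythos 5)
Your proposal takes a genuinely different route from the paper and, as written, contains substantive gaps that you yourself flag but do not close. The paper does not attempt to order the monomial generators of $J(\Delta)^k$ directly. Instead it first invokes the identity $J(\Delta)^k = J(\Delta)^{(k)}$ for simplicial trees (citing \cite[Corollary~1.6]{Herzog2006SGV}), then shows that the polarization $\widetilde{J(\Delta)^{(k)}}$ is itself the cover ideal $J(\mathcal{H}(\mathbf{k}))$ of an explicitly constructed auxiliary hypergraph, and finally proves that this hypergraph is \emph{vertex decomposable} (Theorem~\ref{Th-MainTheorem}), whence $J(\mathcal{H}(\mathbf{k}))$ has linear quotients by shellability and $J(\Delta)^k$ inherits linear quotients by depolarization. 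Crucially, all the hard combinatorics happens at the level of vertex covers of a single, fixed hypergraph, not at the level of factorizations of monomials in $J(\Delta)^k$.

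Your approach has several concrete weaknesses. First, you nowhere use the fact that ordinary and symbolic powers of $J(\Delta)$ agree for simplicial trees; without this, one has essentially no structural handle on the minimal generators of $J(\Delta)^k$, which do not in general factor uniquely as products of $k$ minimal covers. You tacitly assume such a factorization when you speak of "cover-factors $C_1,\dots,C_k$ of $u$" and of "swapping $z$ for $x$ in $C_j$," but the same monomial $u$ can arise from many different $k$-tuples of covers, and the colon $(v_1,\dots,v_{i-1}):v_i$ must be controlled for every competitor $v_j<v_i$, not merely for one privileged factorization. Second, the reduction to $\Delta' = \Delta\setminus\{F\}$ is more delicate than your sketch suggests: a generator of $J(\Delta)^k$ need not restrict to a generator of $J(\Delta')^k$, and the passage in the other direction requires re-covering $F$, which changes degrees and can break minimality. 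Third, the ordering you propose ("$x$-divisible block first, then lexicographic guided by the good-leaf chain") is not actually verified to give linear quotients, and your own final paragraph concedes that the verification is an "obstacle" you "anticipate," requiring an auxiliary lemma you have neither stated nor proved. As it stands, the proposal is a strategy outline rather than a proof; the paper's route through polarization and vertex decomposability avoids every one of these bookkeeping issues because it never needs to track factorizations of power products, only vertex covers of the auxiliary hypergraph.
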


Another problem of interest is to investigate properties of an ideal $I$ that are preserved under taking powers. It is
natural to ask that if a graded ideal I is componentwise linear, then does the ideal $I^s$ also componentwise linear for all $s \geq 1$? Herzog, Hibi and Zheng  \cite{HHZ} prove that if $I(G)$ has a linear resolution, then so does every power of $I(G).$ However, there are examples such that $I$ has a linear resolution (so is componentwise
linear), but $I^2$ does not have this property. First such example is due to Terai. Terai observed that, if the characteristic of the base field is zero, then the Stanley Reisner ideal $I=(x_1x_2x_3,x_1x_2x_5, x_1x_3x_6, x_1x_4x_5, x_1x_4x_6, x_2x_3x_4, x_2x_4x_6, x_2x_5x_6, x_3x_4x_5, x_3x_5x_6)$ has a linear resolution, while $I^2$ has no linear resolution. However, if the characteristic of the base field is $2,$ then $I$ does not have a linear
resolution.  In \cite{Sturmfels} Sturmfels gave an example of a square-free monomial ideal $I= (x_4x_5x_6, x_3x_5x_6, x_3x_4x_6, x_3x_4x_5, x_2x_5x_6, x_2x_3x_4, x_1x_3x_6, x_1x_4x_5)$ with  linear resolution (independent of the characteristic of the base field). But $I^2$ does  not have a linear resolution. As a consequence of Theorem \ref{Th-1stMainResult} and  \cite[Corollary 7.8]{Faridi05}, we obtain the second main result of this article.
\begin{thm}\label{Th-2ndMainResult}
Let $\Delta$ be a simplicial tree. Then the following are equivalent. 
\begin{enumerate}[(i)]
	\item $J(\Delta)$ has a linear resolution.
	\item $J(\Delta)^s$ has a linear resolution for some $s \geq 1.$
	\item $J(\Delta)^s$ has a linear resolution for all $s \geq 1.$
	\item $S/I(\Delta)$ is Cohen-Macaulay.
	\item $\Delta$ is unmixed.
	\item $\Delta$ is grafted.
\end{enumerate}
\end{thm}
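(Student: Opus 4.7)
My plan is to chain the six conditions together by combining Theorem~\ref{Th-1stMainResult} with the tree-theoretic equivalences of Faridi. Since Theorem~\ref{Th-1stMainResult} already tells me that every power $J(\Delta)^s$ is componentwise linear, the only remaining obstruction to a linear resolution is equigeneration: a componentwise linear ideal has a linear resolution if and only if it is generated in a single degree. So the whole argument should reduce to tracking one question, namely when $J(\Delta)$ (equivalently each $J(\Delta)^s$) is generated in a single degree.

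With this in mind, I would prove the cycle $(v) \Rightarrow (iii) \Rightarrow (ii) \Rightarrow (i) \Rightarrow (v)$. For $(v) \Rightarrow (iii)$, unmixedness of $\Delta$ says all minimal vertex covers have the same cardinality, so $J(\Delta)$, and hence $J(\Delta)^s$, is equigenerated; combined with Theorem~\ref{Th-1stMainResult}, a linear resolution follows. The implication $(iii) \Rightarrow (ii)$ is immediate. For $(ii) \Rightarrow (i)$, I would label minimal generators $u_1, \dots, u_m$ of $J(\Delta)$ of degrees $d_1 \le \cdots \le d_m$; since $u_1^s$ and $u_m^s$ lie in the minimal generating set of $J(\Delta)^s$, a linear resolution of $J(\Delta)^s$ forces $s d_1 = s d_m$, hence $d_1 = d_m$, so $J(\Delta)$ is equigenerated, and Theorem~\ref{Th-1stMainResult} then upgrades componentwise linearity to a linear resolution of $J(\Delta)$. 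Finally, $(i) \Rightarrow (v)$ is just the observation that a linear resolution of $J(\Delta)$ forces equigeneration of its minimal generating set, which for the vertex cover ideal is precisely unmixedness of $\Delta$.

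To close the six-fold loop, I would invoke \cite[Corollary 7.8]{Faridi05}, which for simplicial trees supplies $(iv) \Leftrightarrow (v) \Leftrightarrow (vi)$: the Cohen-Macaulay, unmixed, and grafted conditions coincide. Splicing this with the cycle above finishes the theorem.

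I do not anticipate any serious obstacle: Theorem~\ref{Th-1stMainResult} absorbs all of the homological work, and Faridi's corollary supplies the combinatorial half. The only mildly subtle step is $(ii) \Rightarrow (i)$, where one must argue that equigeneration of a single power $J(\Delta)^s$ forces equigeneration of $J(\Delta)$ itself; but the simple trick of comparing $u_1^s$ with $u_m^s$ inside the minimal generating set of $J(\Delta)^s$ makes this transparent, and one should only check that taking $s$-th powers indeed preserves minimality among the chosen extremal generators.
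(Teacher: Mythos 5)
Your proof is correct but follows a genuinely different route from the paper. The paper first establishes $\reg(J(\Delta)^s) = s\deg(J(\Delta))$ (Theorem~\ref{Th-Reg}) and uses it to get the chain (i) $\Leftrightarrow$ (ii) $\Leftrightarrow$ (iii), then invokes Eagon--Reiner for (i) $\Leftrightarrow$ (iv) and Faridi's results for (iv) $\Leftrightarrow$ (v) $\Leftrightarrow$ (vi). You instead bypass the regularity formula and Eagon--Reiner entirely: you run the cycle $(v) \Rightarrow (iii) \Rightarrow (ii) \Rightarrow (i) \Rightarrow (v)$ using only Theorem~\ref{Th-1stMainResult} plus the observation that a componentwise linear ideal has a linear resolution exactly when it is equigenerated, and then graft on Faridi's (iv) $\Leftrightarrow$ (v) $\Leftrightarrow$ (vi). This is more elementary, since it reduces everything to a degree count rather than a regularity computation, and it exposes more clearly what the homological content of (i)--(iii) really is.

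The one point you flag as ``mildly subtle'' deserves to be proved rather than left as a check, because it is \emph{false} for general monomial ideals: if $I = (x^2, y^2, xyz)$, then $(xyz)^2 = x^2y^2z^2$ is a proper multiple of $x^2y^2 \in I^2$, so the top-degree generator's $s$-th power need not be a minimal generator of $I^s$. What rescues your argument is squarefreeness of $J(\Delta)$. If $u_m$ is a squarefree minimal generator of maximal degree and $v_1\cdots v_s \mid u_m^s$ with each $v_i$ a squarefree minimal generator, then for every $i$ and every variable $x$ in $v_i$ one must have $x \in \operatorname{supp}(u_m)$, whence $v_i \mid u_m$ and so $v_i = u_m$ by minimality; hence the only generator of $J(\Delta)^s$ dividing $u_m^s$ is $u_m^s$ itself, and $u_m^s$ is a minimal generator. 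With this inserted, your $(ii) \Rightarrow (i)$ is airtight. A small bibliographic remark: the paper cites both \cite[Corollary~7.8]{Faridi05} and \cite[Corollary~8.3]{Faridi05} for the combinatorial equivalences (iv) $\Leftrightarrow$ (v) $\Leftrightarrow$ (vi); you should verify whether Corollary~7.8 alone supplies all three.
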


\section{Preliminaries}
For simplicity, we denote the set $\{1,\dots,n\}$  by $[n].$ Further, we use notation $[n]^a$ for the set $\{(k_1,\dots,k_a): k_i\in [n]\}.$   

Let $\Delta$ be a simplicial complex. The set of all facets of $\Delta$ is denoted by $\mathcal{F}(\Delta).$ If $\mathcal{F}(\Delta)=\{F_1,\dots,F_t\}$, then we write $\Delta=\langle F_1,\dots,F_t\rangle.$ A simplicial complex with only one facet is called a \emph{simplex}. 
By a \emph{subcollection} of $\Delta$, we mean a simplicial complex whose facet set is a subset of the facet set of $\Delta.$
	
\begin{defn}
    Let $\Delta$ be a simplicial complex on the vertex set $\mathcal{V}$ with $\mathcal{F}(\Delta) =\{F_1,\dots,F_t\}.$
\begin{enumerate}[(a)]
	\item For a face $F$ of $\Delta$, the \emph{deletion} of $F$, denoted as $\del_{\Delta}(F)$, is a simplicial complex defined as
	\[
	\del_{\Delta}(F) = \{ G \in \Delta : G \cap F = \emptyset\}. 
	\]
			
	\item For a face $F$ of $\Delta$, the \emph{link} of $F$, denoted as $\link_{\Delta}(F)$, is a simplicial complex defined as 
	\[
	\link_{\Delta}(F) = \{G\in \Delta : G\cap F = \emptyset, G\cup F \in \Delta\}.
	\]
	
	\item A vertex $x \in \mathcal{V}$ is called a \emph{shedding vertex} of $\Delta$ if there is no face in $\link_{\Delta}(x)$ which is a facet in $\del_{\Delta}(x).$
	$\Delta$ is said to be \emph{vertex decomposable} if it is either a simplex or it has shedding vertex $x$ such that $\del_{\Delta}(x)$ and $\link_{\Delta}(x)$ are vertex decomposable.
	
	\item $\Delta$ is said to be \emph{shellable} if exists a linear order $F_{i_1}\dots,F_{i_t}$ of all facets of $\Delta$ such that for all $r,s\in [t]$ with $ r<s$, there exists $x \in F_{i_s}\setminus F_{i_r}$ and $j\in [s-1]$ with $F_{i_s}\setminus F_{i_j} =\{x\}.$
			
	\item $\Delta$ is said to be \emph{connected} if for every $i\neq j$, there exists a sequence of facets $F_{t_1},\dots,F_{t_r}$ of $\Delta$ such that $F_{t_1} = F_i$ ,  $F_{t_r} = F_j$ and $F_{t_s} \cap F_{t_{s+1}} \neq \emptyset$   for $s \in [r-1].$
			
	\item A facet $F$ of $\Delta$ is called a \emph{leaf} if either $F$ is the only facet of $\Delta$, or there exists facet $G \in \mathcal{F}(\Delta)$ with $G \neq F$ such that $F\cap H \subset F\cap G$ for every $H \in \Delta$ with $H \neq F.$ Further, $G$ is called a \emph{branch} of F. In particular, if $F\cap G\ne\emptyset$, then $G$ is called a \emph{joint} of $F.$
			
	\item A leaf $F$ of $\Delta$ is called a \emph{good leaf} if the set $\{ F\cap G : G \in \mathcal{F}(\Delta) \}$ is totally ordered with respect to inclusion.
			
	\item $\Delta$ is called a \emph{simplicial tree} if it is connected and every non-empty subcollection of $\Delta$ has a leaf. If $\Delta$ is not connected, and every component of $\Delta$ is a tree, then $\Delta$ is called a \emph{forest}.
	
	\item An order $F_1,\dots,F_t$ of all facets of $\Delta$ is called a \emph{good leaf order} if for each $i \in [t-1]$, $F_i$ is a good leaf of subcollection $\langle F_i,\dots,F_t\rangle$ of $\Delta.$
	\end{enumerate}
\end{defn}

\begin{defn}
	A simplicial complex $\Delta$ is said to be \emph{grafting} of a simplicial complex 
	$\Delta'$ with simplices $F_1,\dots,F_s$ (or we say $\Delta$ is \emph{grafted}) if the following conditions are satisfied.
	\begin{enumerate}[(a)]
		\item $\Delta =\langle F_1,\dots,F_s\rangle \cup\Delta'$;
		\item $\mathcal{V}(\Delta')\subset \bigcup_{i=0}^{s}\mathcal{V}(F_i)$;
		\item $F_i$ is a leaf of $\Delta$ for all $i \in [s]$;
		\item $F_i \notin \mathcal{F}(\Delta')$ for all $i \in [s]$;
		\item $F_i\cap F_j =\emptyset$ for $i\neq j$;
		\item  If $G \in \mathcal{F}(\Delta')$ is a joint of $\Delta$, then $\Delta\setminus G$ is also grafted.
	\end{enumerate}
\end{defn}

\begin{defn}
    Let $\mathcal{V}$ be a finite set and $\mathcal{E}$ be a collection of non-empty subsets of $\mathcal{V}.$ Then the pair $\mathcal{H}=(\mathcal{V},\mathcal{E})$ is called a \emph{hypergraph}. Elements of $\mathcal{V}$ are called \emph{vertices} and elements of $\mathcal{E}$ are called \emph{edges}. We will use $\mathcal{V}(\mathcal{H})$ and $\mathcal{E}(\mathcal{H})$ to denote the sets of vertices and edges of $\mathcal{H}$, respectively. A hypergraph $\mathcal{H}$ is called \emph{simple} if for any $F,G\in \mathcal{E}(\mathcal{H})$ with $F \neq G$, we have $F\not\subset G.$ An edge $E$ is called \emph{trivial} if it contains only one vertex. A vertex $x \in \mathcal{V}(\mathcal{H})$ is called \emph{isolated} if $\{x\} \in \mathcal{E}(\mathcal{H})$, or $x \notin E$ for all $E \in \mathcal{E}(\mathcal{H}).$
    A hypergraph $\mathcal{H}$ is called \emph{isolated} if every vertex of $\mathcal{H}$ is isolated. Simple hypergraphs are also called \emph{clutters}.
\end{defn}

All hypergraphs considered in this paper are simple.

\begin{notation}
	Let $\mathcal{H}$ be a hypergraph. Then the hypergraph obtained by deleting its isolated vertices is denoted by $\mathcal{H}^\circ.$
\end{notation}

\begin{defn}
	Let $\mathcal{H}=(\mathcal{V}(\mathcal{H}),\mathcal{E}(\mathcal{H}))$ be a hypergraph. 
	\begin{enumerate}[(a)]
		\item A subset $C$ of $\mathcal{V}(\mathcal{H})$ is called a \emph{vertex cover} of $\mathcal{H}$, if $C \cap E \neq \emptyset$ for all $E \in \mathcal{E}(\mathcal{H}).$ A vertex cover $C$ of $\mathcal{H}$ is called \emph{minimal} if there is no proper subset of  $C$ which is a vertex cover of $\mathcal{H}.$
		
		\item A subset $W$ of $\mathcal{V}(\mathcal{H})$ is called an \emph{independent set} if $E\cap (\mathcal{V}(\mathcal{H}) \setminus W)\neq \emptyset$ for all 
		$E\in \mathcal{E}(\mathcal{H}).$ 
	\end{enumerate}
\end{defn}

\begin{defn}
    Let $\Delta$ be a simplicial complex and $\mathcal{H}$ be a hypergraph.
	    \begin{enumerate}[(a)]
		    \item The \emph{independence complex} of $\mathcal{H}$, denoted by $\Delta(\mathcal{H})$, is the simplicial complex whose faces are independent sets of $\mathcal{H}.$

            \item The hypergraph \emph{associated} to $\Delta$, denoted by $\mathcal{H}(\Delta)$, is the hypergraph whose edge set is $\mathcal{F}(\Delta).$
            
            \item If cardinality of each minimal vertex cover of $\mathcal{H}$ is same, then we say that $\mathcal{H}$ is \emph{unmixed}. Further,  $\Delta$ is called unmixed if $\mathcal{H}(\Delta)$ is unmixed.
	    \end{enumerate}
\end{defn}

Let $S = \mathbb{K}[x_1,\dots,x_n]$ be a polynomial ring over a field $\mathbb{K}.$

\begin{defn}
    Let $\mathcal{V} =\{x_1,\dots,x_n\}$ be a finite set. For a subset $F\subset \mathcal{V}$, let $x_F$ denotes the monomial $\prod_{x_j\in F} x_j$ in $S$, and $\wp_F$ denotes the prime ideal of $S$ generated by the variables $x_j \in F.$ 
\begin{enumerate}[(a)]
	\item Let $\mathcal{H}$ be a hypergraph on the vertex set $\mathcal{V}.$ Then the \emph{edge ideal} of $\mathcal{H}$, denoted by $I(\mathcal{H})$, is the squarefree monomial ideal in $S$ given by
	$$
	I(\mathcal{H})=\langle x_F: F\in \mathcal{E}(\mathcal{H})\rangle.
	$$ 
    The Alexander dual $J(\mathcal{H})=I(\mathcal{H})^{\lor}$ of $I(\mathcal{H})$ is called the \emph{vertex cover ideal} of $\mathcal{H}.$ Note that $J(\mathcal{H})= \bigcap_{F \in \mathcal{E}(\mathcal{H})}\wp_F.$
    
	\item Let $\Delta$ be a simplicial complex on the vertex set $\mathcal{V}.$ Then the \emph{facet ideal} of $\Delta$, denoted by $I(\Delta)$, is the squarefree monomial ideal in $S$ given by
	$$
	I(\Delta) = \langle x_F: F\in \mathcal{F}(\Delta) \rangle.
	$$
	The  \emph{vertex cover ideal} of $\Delta$ is given by $J(\Delta) = J(\mathcal{H}(\Delta)).$ Note that $I(\Delta) = I(\mathcal{H}(\Delta)).$ Thus,
	$J(\Delta) = I(\Delta)^{\vee}.$
\end{enumerate}
\end{defn}

\begin{defn}
	Let $\mathcal{H} = (\mathcal{V}(\mathcal{H}),\mathcal{E}(\mathcal{H}))$ be a hypergraph. For $x_j\in \mathcal{V}(\mathcal{H})$, 
	\begin{enumerate}[(a)]
		\item a \emph{contraction} of $\mathcal{H}$, denoted by $\mathcal{H}/x_j$, is
		the hypergraph whose edge ideal is the squarefree monomial ideal $I(\mathcal{H}):x_j$, and
		
		\item a \emph{deletion} of $\mathcal{H}$, denoted by $\mathcal{H} \setminus x_j$, is
		the hypergraph whose edge ideal is the squarefree monomial ideal $I(\mathcal{H})\cap k[x_1,\dots,\hat{x_j},\dots,x_n].$ 
	\end{enumerate}
\end{defn}

Let us consider the following sets
    $$\mathcal{E}' =\{F\setminus \{x_j\}: F\in \mathcal{E}(\mathcal{H}),x_j\in F\}
    $$
and
    $$\mathcal{E}'' =\{E \in \mathcal{E}(\mathcal{H}): x_j\not{\in} E,F\setminus \{x_j\}\not \subset E ~\mbox{for all} ~ F\in \mathcal{E}(\mathcal{H})\setminus\{E\}\}.$$
Then the vertex set and the edge set of contraction $H/x_j$ (resp. deletion $H\setminus x_j$) of $\mathcal{H}$ are given by 
    $$\mathcal{V}(\mathcal{H}/x_j) = \mathcal{V}(\mathcal{H})\setminus \{x_j\} ~(\text{resp. } \mathcal{V}(\mathcal{H}\setminus x_j) = \mathcal{V}(\mathcal{H})\setminus\{x_j\})$$
and 
    $$\mathcal{E}(\mathcal{H}/x_j) = \mathcal{E}' \cup \mathcal{E}''
    ~(\text{resp. }\mathcal{E}(\mathcal{H} \setminus x_j) =
    \{E \in \mathcal{E}(\mathcal{H}): x\not{\in} E\}).$$
Thus the definition of a vertex decomposable simplicial complex translates to a vertex
decomposable hypergraph as follows.

\begin{defn}
	A hypergraph $\mathcal{H} = (\mathcal{V}(\mathcal{H}),\mathcal{E}(\mathcal{H}))$ is said to be \emph{vertex decomposable} if either it is an isolated hypergraph or there exists a vertex $x\in \mathcal{V}(\mathcal{H})$ such that
	\begin{enumerate}[(a)]
		\item $\mathcal{H}/x$ and $\mathcal{H} \setminus x$ are vertex decomposable, and
		
		\item for every independent set $W$ in $\mathcal{H}/x$, there exists an independent set $W'$ in $H\setminus x$ such that $W\subsetneq W'.$
	\end{enumerate}
\end{defn}

\begin{rmk}\label{R-ContRemark}
	Let $\mathcal{H}$ be a simple hypergraph on the vertex set $\mathcal{V}(\mathcal{H})$ and the edge set $\mathcal{E}(\mathcal{H}).$ Then for every $x \in \mathcal{V}(\mathcal{H})$ and $E \in \mathcal{E}(\mathcal{H})$, there exists an edge $E' \in \mathcal{E}(\mathcal{H}/x)$ such that $E' \subset E \setminus \{x\}.$
\end{rmk}
    
\begin{defn}
	Let $M$ be a finitely generated $\mathbb{Z}$-graded $S$-module.
	\begin{enumerate}[(a)]
		\item Then $\beta_{i,j}^S(M)=(\Tor_{i}^S(M,\mathbb{K}))_j$ is called $(i,j)^{th}$ \emph{graded Betti numbers} of $M.$
		
		\item The \emph{regularity} of $M$, denoted as $\reg(M)$, is defined as
		\begin{equation*}
			\reg(M)=\max\{j-i:\beta_{i,j}^S(M)\neq 0\}.
		\end{equation*}
		
		\item If there is a integer $d$ such that $\beta_{i,i+b}=0$ for all $i$ and for all $b\neq d$, then we say that the module $M$ has \emph{linear resolution}.
		
		\item The module $M$ is said to be \emph{sequentially Cohen-Macaulay} if there exists a finite sequence
		\begin{equation*}
			0=M_0\subset M_1 \subset \cdots \subset M_r=M
		\end{equation*}
		of graded $S$-submodules of $M$ so that quotient module ${M_i}/{M_{i-1}}$ is Cohen-Macaulay for all $i\in [r]$, and $\dim(M_i/M_{i-1})<\dim(M_{i+1}/M_i)$ for all $i\in [r-1].$
	\end{enumerate}
\end{defn}

A simplicial complex $\Delta$ is said to be \emph{Cohen-Macaulay} (resp. \emph{sequentially Cohen-Macaulay}) over $\mathbb{K}$ if $S/I_{\Delta}$ is Cohen-Macaulay (resp. sequentially Cohen-Macaulay) ring. Further, a hypergraph $\mathcal{H}$ is said to be \emph{vertex decomposable} (resp. \emph{shellable, sequentially Cohen-Macaulay}) if its independence complex $\Delta(\mathcal{H})$ is vertex decomposable (resp. shellable, sequentially Cohen-Macaulay). The following implications for a hypergraph are known:
$$ \text{vertex decomposable} \Rightarrow \text{shellable} \Rightarrow \text{sequentially Cohen-Macaulay}. $$

\begin{defn}
	A graded ideal $I$ of $S$ is called \emph{componentwise linear} if $I_{<j>}$ has a linear resolution for all $j$, where $I_{<j>}$ is the ideal generated by all elements of $I$ of degree $j.$
\end{defn}

In general, it is not easy to find ideals which are componentwise linear. However,  there is a smaller class of componentwise linear ideals consisting of ideals with linear quotients.
\begin{defn}
	Let $I$ be a monomial ideal of $S.$ Then $I$ is said to have \emph{linear quotients} if there is an ordering $u_1,\dots,u_r$ of minimal generators of $I$ such that $\langle u_1,\dots,u_{i-1}\rangle:\langle u_i\rangle$ is generated by a subset of $\{x_1,\dots,x_n\}$
	for all $i.$
\end{defn}

Now we introduce the notion of  the $k$th \emph{symbolic power} of a squarefree monomial ideal.

\begin{defn}
	Let $I$ be a squarefree monomial ideal in $S$ with irredundant primary decomposition $I=\wp_1\cap\cdots\cap\wp_t$, where $\wp_i$ is an ideal generated by some variables in $S.$ Then for $k \in \mathbb{N}_{>0}$, the $k$th \emph{symbolic power}  $I^{(k)}$ of $I$, is defined as follows:
	\[
		I^{(k)}=\wp_1^{k}\cap\cdots\cap\wp_t^{k}.
	\]
\end{defn}

Now we recall the definition of polarization. It is a very useful tool to
convert a monomial ideal in to a squarefree monomial ideal.

\begin{defn}
	Let $I\subset S$ be a monomial ideal with generators $u_1,\dots,u_m$, where $u_i=\prod_{j=1}^{n}x_j^{a_{ij}}.$ For $j\in [n]$, set 
	$a_j=\max\{a_{ij}:i\in [m]\}.$ 
	Consider a polynomial ring $$T=\mathbb{K}[x_{11},\dots,x_{1a_1},x_{21},\dots,x_{2a_2},\dots,x_{n1},\dots,x_{na_n}].$$ 
	Then the squarefree monomial ideal $\widetilde{I}$ in $T$ generated by squarefree monomials $w_1,\dots,w_m$, where $w_i=\displaystyle\prod_{j=1}^{n}\prod_{k=1}^{a_{ij}}x_{jk}$, is called \emph{polarization} of $I.$
\end{defn}

\section{Powers of vertex cover ideal of a simplicial tree}

In this section, we study the minimal free resolution of powers of the cover ideal $J(\Delta)$ of a simplicial tree $\Delta.$ We prove that $J(\Delta)^k$ is componentwise linear for all $k \in \mathbb{N}_{>0}.$ In order to prove this result, we introduce some basic notion.

\begin{rmk}\label{R-Forest}\cite[Corollary 3.4]{Herzog2006SGV}
	A simplicial complex $\Delta$ has a good leaf order if and only if $\Delta$ is a forest. In particular, every forest has a good leaf.
\end{rmk}

\begin{defn}\label{D-GoodVertices} 
	For $t\ge 2$, let $\Delta=\langle F_1,\dots,F_t\rangle$ be a simplicial tree. Suppose $F_1$ is a good leaf of $\Delta.$ By definition of good leaf, there exists an ordering $i_1,\dots, i_{t-1}$ such that 
	$$F_1\cap F_{i_1}\supseteq F_1\cap F_{i_2}\supseteq\cdots\supseteq F_1\cap F_{i_{t-1}}.$$ 
	Now, let $g \in [t-1]$ be the largest integer such that $F_1\cap F_{i_g} \neq \emptyset.$ For $l\in [g]$, suppose that 
	$$F_1\cap F_{i_l} = \{x_{j_0},\dots,x_{j_1},\dots,x_{j_2},\dots,x_{j_{g-l}},\dots,x_{j_{g+1-l}}\}.$$ The finite sequence $(x_{j_0},\dots,x_{j_1},\dots,x_{j_2},\dots,x_{j_{g-1}},\dots,x_{j_g})$ is called \emph{a sequence of good vertices} of good leaf $F_1$ in $\Delta.$
\end{defn}	

In the following example, we see that sequence of good vertices of a leaf may not be unique.\\

\begin{minipage}{\linewidth}
\hspace{-0.5cm}	
	\begin{minipage}{0.7\linewidth}
		\begin{exmp}\label{E1}
		Let $\Delta$ be the simplicial tree as shown in Figure \ref{Fig 1} with facets 
		$F_1 = \{x_1,x_2,x_3,x_4\}$, $F_2 = \{x_1,x_2,x_3,x_5\}$, $F_3 = \{x_1,x_5,x_6\}$ and $F_4 = \{x_6,x_7,x_8\}.$
		We have 
		$$F_1\cap F_2 = \{x_1,x_2,x_3\}\supseteq F_1\cap F_3 = \{x_1\}\supseteq F_1\cap F_4 = \emptyset.$$
		Hence, $F_1$ is a good leaf of $\Delta.$ Also note that $(x_1,x_2,x_3)$ and $(x_1,x_3,x_2)$ both are sequences of good vertices of good leaf $F_1.$
	\end{exmp}
	\end{minipage}
		\begin{minipage}{0.30\linewidth}
\begin{figure}[H]
	\begin{tikzpicture}[scale=0.6][line cap=round,line join=round,>=triangle 45,x=1.0cm,y=1.0cm]
		\fill[lightgray] (0,0) -- (1,0.7) -- (1,2) -- cycle;
		\fill[lightgray] (0,0) -- (2,0) -- (1,2) -- (1,0.7) -- cycle;
		\fill[lightgray] (2,0) -- (1,2) -- (1,0.7) -- (3,2) -- cycle;
		\fill[lightgray] (1,2) -- (1,0.7) -- (3,2) -- cycle;
		\fill[lightgray] (2,0) -- (3,2) -- (4,0) -- cycle;
		\fill[lightgray] (4,0) -- (5,2) -- (6,0) -- cycle;
		\draw (0,0)-- (2,0);
		\draw (2,0)-- (1,2);
		\draw (1,2) -- (0,0);
		\draw (1,0.7)-- (0,0);
		\draw (1,0.7)-- (2,0);
		\draw (1,0.7)-- (1,2);
		\draw (1,0.7)-- (3,2);
		\draw (2,0)-- (3,2);
		\draw (1,2)-- (3,2);
		\draw (3,2)-- (4,0);
		\draw (2,0)-- (4,0);
		\draw (4,0)-- (5,2);
		\draw (4,0)-- (6,0);
		\draw (5,2)-- (6,0);
		
		\begin{scriptsize}
			\fill  (0,0) circle (1.5pt);
			\draw[below] (0,0) node {$x_4$};
			\fill  (1,2) circle (1.5pt);
			\draw[above] (1,2) node {$x_2$};
			\fill  (1,0.7) circle (1.5pt);
			\draw[below] (1,0.7) node {$x_3$};
			\fill  (2,0) circle (1.5pt);
			\draw[below] (2,0) node {$x_1$};
			\fill  (3,2) circle (1.5pt);
			\draw[above] (3,2) node {$x_5$};
			\fill  (4,0) circle (1.5pt);
			\draw[below] (4,0) node {$x_6$};
			\fill  (5,2) circle (1.5pt);
			\draw[above] (5,2) node {$x_7$};
			\fill  (6,0) circle (1.5pt);
			\draw[below] (6,0) node {$x_8$};
		\end{scriptsize}
	\end{tikzpicture}
	\caption{}
	\label{Fig 1}
\end{figure}
\end{minipage}
\end{minipage}
\\

To investigate symbolic powers of the cover ideal $J(\mathcal{H})$ of a hypergraph $\mathcal{H}$, we construct a new hypergraph from $\mathcal{H}$ such that the polarization of symbolic powers of $J(\mathcal{H})$  is equal to the cover ideal of the newly constructed hypergraph.

\noindent 
\textbf{Construction}.
Let $\mathcal{H}$ be a hypergraph with the vertex set 
$\mathcal{V}(\mathcal{H}) = \{x_1,\dots ,x_n\}$ and the edge set $\mathcal{E}(\mathcal{H}) = \{F_1,\dots ,F_t\}.$ Let $k\in \mathbb{N}_{>0}.$ For $\mathbf{f} =(f_1,\dots,f_b) \in [k]^{b}$, let $|\mathbf{f}|$ denotes the sum $\sum_{p=1}^{b} f_p.$ Then for an edge $F=\{x_{j_1},\dots ,x_{j_b}\}$, we define a hypergraph $F(k)$ with the vertex set
$$\mathcal{V}(F(k)) =\{x_{j_p,f}: p \in [b], f\in [k]\}$$ 
and the edge set 
$$\mathcal{E}(F(k)) =\{\{x_{j_1,f_1},\dots ,x_{j_b,f_b}\}:|\mathbf{f}| \le k+b-1\}.$$
By convention, we set $F(0)$ to be an isolated hypergraph on vertices $\mathcal{V}(F(0))=\{x_{j_1,1},\dots ,x_{j_a,1}\}.$ For 
$\mathbf{k}_t = (k_1,\dots,k_t)\in \mathbb{N}^t$, we define a hypergraph $\mathcal{H}(\mathbf{k}_t) = \mathcal{H}(k_1,\dots,k_t)$ with the vertex set $\mathcal{V}(\mathcal{H}(\mathbf{k}_t)) = \bigcup\limits_{i=1}^{t} \mathcal{V}(F_i(k_i))$ and the edge set 
$\mathcal{E}(\mathcal{H}(\mathbf{k}_t)) = \bigcup\limits_{i=1}^{t} \mathcal{E}((F_i(k_i))).$ In particular, if $k_i=k$ for all $i \in [t]$, then we write $\mathcal{H}(\mathbf{k})$ for the hypergraph $\mathcal{H}(\bk_t).$
\\\\
We illustrate above construction with help of the following example.
\begin{exmp}\label{E2}
	Let us consider the hypergraph $\mathcal{H}$ on the vertex set $\mathcal{V}(\mathcal{H}) = \{x_1,\dots,x_6\}$ and the edge set
	$\mathcal{E}(\mathcal{H}) = \{F_1,F_2,F_3\}$, where $F_1 = \{x_1,x_2,x_3\}$, $F_2 = \{x_3,x_4,x_5\}$, and $F_3 = \{x_4,x_5,x_6\}.$
	For $\mathbf{k}_3=(2,1,3)\in \mathbb{N}^3$, $\mathcal{H}(\mathbf{k}_3)$ is the hypergraph with the vertex set
    \begin{align*}
	    \mathcal{V}(\mathcal{H}(\mathbf{k}_3)) & = \{x_{1,1},x_{1,2},x_{2,1},x_{2,2},x_{3,1},x_{3,2},x_{4,1},x_{4,2},x_{4,3},x_{5,1},x_{5,2},x_{5,3},x_{6,1},x_{6,2},x_{6,3}\},
    \end{align*} 
    and the edge set\\
    $\mathcal{E}(\mathcal{H}(\mathbf{k}_3))=\left\{\begin{array}{ll}
        &\{x_{1,1},x_{2,1},x_{3,1}\},\{x_{1,2},x_{2,1},x_{3,1}\},\{x_{1,1},x_{2,2},x_{3,1}\},\{x_{1,1},x_{2,1},x_{3,2}\}, \nonumber \\
        &  \{x_{3,1},x_{4,1},x_{5,1}\},\{x_{4,1},x_{5,1},x_{6,1}\},\{x_{4,2},x_{5,1},x_{6,1}\},\{x_{4,1},x_{5,2},x_{6,1}\}, \nonumber \\ 
        &  \{x_{4,1},x_{5,1},x_{6,2}\},\{x_{4,3},x_{5,1},x_{6,1}\},\{x_{4,1},x_{5,3},x_{6,1}\},\{x_{4,1},x_{5,1},x_{6,3}\},\nonumber \\
        & \{x_{4,2},x_{5,2},x_{6,1}\},\{x_{4,2},x_{5,1},x_{6,2}\},\{x_{4,1},x_{5,2},x_{6,2}\} 
        \end{array}\right\}.$
\end{exmp}
 
The following lemma establishes a relation between the cover ideal $J(\mathcal{H}(\mathbf{k}))$ and the $k$th symbolic power of the cover ideal $J(\mathcal{H}).$
	  
\begin{lem}\label{L-Polarisation}
	Let $\mathcal{H}$ be a hypergraph with the vertex set $\mathcal{V}(\mathcal{H})=\{x_1,\dots,x_n\}$ and the edge set $\mathcal{E}(\mathcal{H})=\{F_1,\dots,F_t\}.$ Then 
	$J(\mathcal{H}(\mathbf{k})) = \widetilde{J(\mathcal{H})^{(k)}}$, where $k\in\mathbb{N}_{>0}.$
\end{lem}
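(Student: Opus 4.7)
The plan is to verify the equality by comparing squarefree monomial contents in $T$. Since both $\widetilde{J(\mathcal{H})^{(k)}}$ and $J(\mathcal{H}(\mathbf{k}))$ are squarefree monomial ideals, equality follows once one shows that a squarefree monomial $w\in T$ lies in one ideal if and only if it lies in the other. For such a $w$ and each $j\in [n]$, I would introduce the invariant
\begin{equation*}
r_j(w) = \max\{r\ge 0 : x_{j,1},x_{j,2},\ldots,x_{j,r}\text{ all divide }w\},
\end{equation*}
with the convention $r_j(w)=0$ if $x_{j,1}\nmid w$, and set $\tilde{w}=\prod_{j=1}^{n} x_j^{r_j(w)}\in S$. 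The strategy is to show that each side is characterized by the common condition
\begin{equation*}
\sum_{x_j\in F_i} r_j(w)\ge k \qquad \text{for every }i\in[t].
\end{equation*}

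For the polarization side, $w\in \widetilde{J(\mathcal{H})^{(k)}}$ precisely when $\widetilde{u}\mid w$ for some minimal generator $u$ of $J(\mathcal{H})^{(k)}$. By the definition of polarization, $\widetilde{u}=\prod_{j}\prod_{f=1}^{\deg_{x_j}(u)}x_{j,f}$, so $\widetilde{u}\mid w$ is equivalent to $\deg_{x_j}(u)\le r_j(w)$ for all $j$, i.e., to $u\mid\tilde{w}$. Thus $w\in\widetilde{J(\mathcal{H})^{(k)}}$ iff $\tilde{w}\in J(\mathcal{H})^{(k)}=\bigcap_i\wp_{F_i}^{k}$, which translates directly into the displayed condition.

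For the cover-ideal side, $w\in J(\mathcal{H}(\mathbf{k}))=\bigcap_{E\in\mathcal{E}(\mathcal{H}(\mathbf{k}))}\wp_E$ iff every edge of $\mathcal{H}(\mathbf{k})$ contains a variable dividing $w$. Fixing $F_i=\{x_{j_1},\ldots,x_{j_b}\}$ and writing $A_{j_p}(w)=\{f\in[k]:x_{j_p,f}\mid w\}$, a failure for $F_i(k)$ amounts to producing a tuple $\mathbf{f}\in\prod_{p=1}^{b}([k]\setminus A_{j_p}(w))$ with $|\mathbf{f}|\le k+b-1$. The central combinatorial step is the elementary identity $\min([k]\setminus A_{j_p}(w))=r_{j_p}(w)+1$ whenever this set is nonempty, which is immediate from the definition of $r_{j_p}(w)$. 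Consequently, a failing $\mathbf{f}$ exists iff $\sum_{p}(r_{j_p}(w)+1)\le k+b-1$, i.e., iff $\sum_{x_j\in F_i} r_j(w)< k$. The edge case $r_{j_p}(w)=k$, for which $[k]\setminus A_{j_p}(w)=\emptyset$, is consistent because no failing $\mathbf{f}$ exists and the sum $\sum_{x_j\in F_i} r_j(w)\ge k$ is then automatic.

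The main obstacle is precisely this combinatorial step and its edge-case bookkeeping, since one must see that the bound $|\mathbf{f}|\le k+b-1$ in the definition of $\mathcal{E}(F_i(k))$ corresponds exactly to the lattice-point structure of $\wp_{F_i}^{k}$ after polarization. Once both sides are expressed through the same condition on the invariants $r_j(w)$, the equality of the two squarefree monomial ideals follows at once.
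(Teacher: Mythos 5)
Your proposal is correct, but it follows a genuinely different route from the paper. The paper's proof is a two-line citation argument: it quotes Faridi's result that polarization commutes with intersection of monomial ideals to write $\widetilde{J(\mathcal{H})^{(k)}} = \bigcap_i \widetilde{\wp_{F_i}^{k}}$, and then quotes Faridi's explicit primary decomposition $\widetilde{\wp_F^k}=\bigcap_{\Sigma f_j\le k+a-1}(x_{j_1,f_1},\dots,x_{j_a,f_a})$ to match the edges of $\mathcal{H}(\mathbf{k})$. You instead verify the equality directly at the level of squarefree monomials in $T$, introducing the invariant $r_j(w)$ and showing that membership on each side is characterized by the same inequalities $\sum_{x_j\in F_i} r_j(w)\ge k$. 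In effect you re-derive from scratch the combinatorial content of the two cited propositions (the reduction $w\in\widetilde{I}\Leftrightarrow\tilde w\in I$ for a monomial ideal $I$, and the edge structure of $\widetilde{\wp_F^k}$). The gain is self-containment and transparency about why the bound $|\mathbf{f}|\le k+b-1$ is precisely the right threshold; the cost is extra length and some bookkeeping (the consecutive-initial-segment invariant $r_j(w)$, the nonstandard case where $A_{j_p}(w)$ has gaps, and the degenerate case $r_{j_p}(w)=k$, all of which you do handle correctly). One small point worth making explicit if this were to replace the paper's proof: the equivalence $w\in\widetilde{I}\Leftrightarrow\tilde w\in I$ uses that every monomial in a monomial ideal is divisible by a minimal generator, so the reverse implication is not only about minimal generators of $I$ dividing $\tilde w$.
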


\begin{proof}
    By using \cite[Proposition 2.3]{faridi}, we have 
    $$
    \widetilde{J(\mathcal{H})^{(k)}} = \widetilde{\wp_{F_1}^k} \cap \cdots \cap \widetilde{\wp_{F_t}^k}.
    $$
    Now, by \cite[Proposition 2.5]{faridi},
    \begin{align*}
    	\widetilde{\wp_F^k} & =\bigcap\limits_{\Sigma f_j\le k+a-1}(x_{j_1,f_1},\dots,x_{j_a,f_a}), \text{ where } 
    	F =\{x_{j_1},\dots,x_{j_a}\}\subset \mathcal{E}(\mathcal{H}).
    \end{align*}
    Thus, we have the equality $\widetilde{J(\mathcal{H})^{(k)}} = J(\mathcal{H}(\mathbf{k})).$		                                             
\end{proof}

The following theorem is the main result of the paper.

\begin{thm}\label{Th-MainTheorem}
	Let $\mathbf{k}_t \in \mathbb{N}^t_{>0}$ and $\Delta$ be a simplicial tree. Then $\mathcal{H}(\mathbf{k}_t)$ is a vertex decomposable hypergraph, where $\mathcal{H} =\mathcal{H}(\Delta).$
\end{thm}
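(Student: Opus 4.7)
I would prove the theorem by induction on the total multiplicity $|\mathbf{k}_t|=k_1+\cdots+k_t$, with a secondary induction on the number of facets $t$ for the base $k_i=1$. The combinatorial handle throughout is a good leaf of $\Delta$ (which exists by Remark~\ref{R-Forest}) together with a free vertex of that leaf: if $F_1$ is a good leaf with branch $F_{i_1}$, then simplicity forces $F_1 \setminus F_{i_1} \ne \emptyset$, and any $x_l \in F_1 \setminus F_{i_1}$ is automatically free in $\Delta$, because $x_l \in F_i$ with $i \ne 1$ would give $x_l \in F_1 \cap F_i \subseteq F_1 \cap F_{i_1}$, contradicting $x_l \notin F_{i_1}$.

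For the base case $|\mathbf{k}_t|=t$ the inequality $|\mathbf{f}| \le k_i + b_i - 1$ with each $k_i=1$ forces $F_i(1)$ to contribute the single edge $\{x_{j,1} : j \in F_i\}$, so $\mathcal{H}(\mathbf{k}_t) \cong \mathcal{H}(\Delta)$; I would use $x_l$ as shedding vertex here, giving a deletion equal to $\mathcal{H}(\langle F_2,\dots,F_t \rangle)$ and a contraction in which $F_1$ shrinks to $F_1 \setminus \{x_l\}$ (and absorbs $F_{i_1}$ whenever $F_1 \setminus \{x_l\} \subseteq F_{i_1}$). Both are hypergraphs of strictly smaller simplicial forests, so the inner induction applies. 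For the main step, when some $k_i > 1$, relabel so $k_1 \ge 2$ and take the shedding vertex $x := x_{l,k_1}$. The key structural fact is that $x$ belongs to \emph{exactly one} edge of $\mathcal{H}(\mathbf{k}_t)$, namely
\[
E_0 = \{x_{l,k_1}\} \cup \{x_{j,1} : j \in F_1 \setminus \{l\}\},
\]
because freeness of $x_l$ restricts $x$ to the block $F_1(k_1)$, while $f_l = k_1$ together with $|\mathbf{f}| \le k_1 + b_1 - 1$ forces $f_p = 1$ for every $p \ne l$. Hence $\mathcal{H}(\mathbf{k}_t) \setminus x$ simply discards $E_0$, while $\mathcal{H}(\mathbf{k}_t)/x$ introduces the short edge $E_0 \setminus \{x\}$ and subsumes the $F_1(k_1)$-edges (and possibly $F_{i_1}(k_{i_1})$-edges) that contain it. The plan is to identify each of these hypergraphs with $\mathcal{H}'(\mathbf{k}'_{t'})$ for strictly smaller data; the shedding condition is then checked uniformly via $W \mapsto W \cup \{x_{l,k_1}\}$, since $E_0$ was the only edge through $x$.

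The principal obstacle is the template-matching of the contraction and deletion to $\mathcal{H}'(\mathbf{k}'_{t'})$: the short edge produced in the contraction interacts with the branch block $F_{i_1}(k_{i_1})$ whenever $F_1 \setminus \{x_l\} \subseteq F_{i_1}$, and the deletion does not directly decrement any single $k_i$. Overcoming this will require careful case analysis based on the good-leaf ordering, the relative sizes of $k_1$ and $k_{i_1}$, and on whether $F_1$ has more than one free vertex; it may also force me to enlarge the inductive class by allowing certain subcollections of $\mathcal{H}(\mathbf{k}_t)$ obtained by removing a controlled family of edges, rather than insisting on the strict $\mathcal{H}'(\mathbf{k}'_{t'})$ form at every reduction step.
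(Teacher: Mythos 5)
Your plan departs from the paper's proof at the very first choice: you take the shedding vertex to be $x_{l,k_1}$ where $x_l$ is a \emph{free} vertex of the good leaf $F_1$ (so $x_l \in F_1 \setminus F_{i_1}$) in the \emph{top} layer $k_1$, whereas the paper (Set-up~\ref{Set-up}) takes $x_{u_1,1}$ with $x_{u_1} \in F_1 \cap F_{i_g}$, a vertex in the \emph{smallest nonempty intersection} with other facets, in the \emph{bottom} layer $1$. These are opposite ends of the leaf, and the consequences are different in kind. Your choice has the virtue that $x_{l,k_1}$ lies on exactly one edge $E_0$, which makes the local picture around the vertex transparent. The paper's choice puts $x_{u_1,1}$ on many edges across several blocks $F_{i_1}(k_{i_1}),\dots,F_{i_g}(k_{i_g})$, which is messier locally, but the payoff is that the contraction systematically ``lowers'' the effective weight $k^{(s)}_i$ on several facets at once in a controlled way.

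However, the obstacle you flag at the end is not a side issue that ``may'' arise --- it is the entire difficulty, and your plan does not contain a mechanism to overcome it. Concretely, $\mathcal{H}(\mathbf{k}_t)\setminus x_{l,k_1}$ is $\mathcal{H}(\mathbf{k}_t)$ with the single edge $E_0$ removed, and this is genuinely \emph{not} of the form $\mathcal{H}'(\mathbf{k}'_{t'})$ for any simplicial forest $\Delta'$ and weight vector: already for $\Delta = \langle\{x_1,x_2\},\{x_2,x_3\}\rangle$ and $\mathbf{k}_2=(2,1)$, deleting $x_{1,2}$ leaves edges $\{x_{1,1},x_{2,1}\},\{x_{1,1},x_{2,2}\},\{x_{2,1},x_{3,1}\}$, which is the block $F_1(2)$ with the asymmetric side-constraint $f_1=1$, not $F_1(k')$ for any $k'$. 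So your induction on $|\mathbf{k}_t|$ (or on $t$) cannot be applied to the deletion without first \emph{defining} a strictly larger inductive class and re-proving everything for it. This is precisely what the paper does: the recursive family $\overline{\mathcal{H}}_{\mathcal{L}_s}$ parametrized by $\{L,D\}$-strings, the sets $A_s,B_s$ recording which vertices have been contracted or deleted, the modified weights $k^{(s)}_i$, and the ``constructible edge'' characterization (Lemma~\ref{L-Constructible}) exist exactly to describe all hypergraphs reachable by iterated deletion and contraction, and Lemmas~\ref{L-Edge1}--\ref{L-Alpha} are the machinery that makes the induction close. Your proposal acknowledges the need for an enlarged class but does not propose what it should be, so the plan as stated is incomplete at its core.

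There is also a concrete error in your shedding verification. You propose to certify the shedding condition ``via $W \mapsto W\cup\{x_{l,k_1}\}$.'' But the shedding condition at $x := x_{l,k_1}$ requires, for every independent set $W$ of $\mathcal{H}(\mathbf{k}_t)/x$, an independent set $W'$ of $\mathcal{H}(\mathbf{k}_t)\setminus x$ with $W \subsetneq W'$, and $x$ is \emph{not} a vertex of $\mathcal{H}(\mathbf{k}_t)\setminus x$, so $W\cup\{x\}$ is not even a subset of its vertex set. What you actually need is to locate some other vertex $v \neq x$ (in the paper's version, a vertex $x_{h_p,f}$ in the top remaining layer of some coordinate $h_p$ of the leaf, see Lemma~\ref{L-SheddingVertex}) such that $W\cup\{v\}$ stays independent in the deletion, and the argument that such a $v$ exists is genuinely delicate --- it hinges on the inequality $|\mathbf{f}| \le k_i^{(s)}+b-1$ giving you slack to shift one layer up. That step needs to be reconstructed from scratch in your setting; the uniqueness of $E_0$ alone does not deliver it.
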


We prove this theorem in the following sequence of lemmas. First we start with the following fact.

\begin{rmk}\label{R-Edge}
	 Let $\mathcal{H}$ be a hypergraph on the vertex set $\mathcal{V}(\mathcal{H})=\{x_1,\dots,x_n\}$ and the edge set $\mathcal{E}(\mathcal{H})=\{\{x_1,\dots,x_n\}\}.$ For $k\in \mathbb{N}$, let $\mathcal{D}=\Delta(\mathcal{H}(\bk))$ and $W\in \link_{\mathcal{D}}(x_{1,1}).$
	 Since $\{x_{1,1},\dots,x_{n,1}\}$ is an edge in $\mathcal{H}(\bk)$ and $W\cup\{x_{1,1}\}\in \mathcal{D}$, it follows that $\{x_{j,1},\dots,x_{j,k}\}\not\subset W$ for some $2 \le j \le n.$
\end{rmk}

\begin{lem}\label{L-Simplex}
    With the notation as in Remark \ref{R-Edge}, $\mathcal{H(\bk)}$ is vertex decomposable.
\end{lem}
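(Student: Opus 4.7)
The plan is to prove vertex decomposability by induction on $n+k$, with $x_{1,1}$ as the shedding vertex; the base cases $n=1$ and $k=0$ give isolated hypergraphs, which are vertex decomposable by definition. In the inductive step I must verify that $\mathcal{H}(\bk)/x_{1,1}$ and $\mathcal{H}(\bk)\setminus x_{1,1}$ are each vertex decomposable and that shedding condition (b) holds.

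A short calculation identifies both the contraction and the deletion with smaller instances of the construction. In $\mathcal{H}(\bk)/x_{1,1}$ the set $\mathcal{E}'$ is exactly $\mathcal{E}(F'(k))$ for $F'=\{x_2,\dots,x_n\}$, and any edge of $\mathcal{H}(\bk)$ with $f_1\ge 2$ has $\sum_{j\ge 2}f_j\le k+n-3$ and so strictly contains the edge of $\mathcal{E}'$ obtained by forgetting $x_{1,f_1}$; hence $\mathcal{E}''=\emptyset$, and $\mathcal{H}(\bk)/x_{1,1}$ is $F'(k)$ together with $k-1$ isolated vertices, vertex decomposable by induction on $n$. For $\mathcal{H}(\bk)\setminus x_{1,1}$, combining $f_1\ge 2$ with $\sum f_j\le k+n-1$ forces $f_j<k$ for every $j\ge 2$ (else $f_1+\sum_{i\ne 1,j}f_i\le n-1$ clashes with $f_1\ge 2$ and $f_i\ge 1$), and the substitution $f_1\mapsto f_1-1\in[k-1]$ puts the surviving edges in bijection with $\mathcal{E}(F(k-1))$; induction on $k$ then applies.

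The main obstacle is the shedding condition. Encoding $W\subseteq\mathcal{V}(\mathcal{H}(\bk))\setminus\{x_{1,1}\}$ by its column slices $A_j(W)=\{f\in[k]:x_{j,f}\in W\}$, I would translate the two independence conditions into inequalities on minima: $W\cup\{x_{1,1}\}$ is independent in $\mathcal{H}(\bk)$ iff either some $A_j(W)=\emptyset$ with $j\ge 2$ or $\sum_{j\ge 2}\min(A_j(W))\ge k+n-1$; and $W$ is independent in $\mathcal{H}(\bk)\setminus x_{1,1}$ iff $A_1=\emptyset$, some $A_j=\emptyset$ with $j\ge 2$, or $\min(A_1)+\sum_{j\ge 2}\min(A_j)\ge k+n$. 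I would split cases by which column tops $x_{j,k}$ lie in $W$. If $x_{j,k}\notin W$ for some $j\ge 2$, set $W'=W\cup\{x_{j,k}\}$: that vertex is isolated in $\mathcal{H}(\bk)\setminus x_{1,1}$, since its unique edge in $\mathcal{H}(\bk)$ passes through $x_{1,1}$. Otherwise, if $x_{1,k}\notin W$, set $W'=W\cup\{x_{1,k}\}$: the only edge of the deletion through $x_{1,k}$ is $\{x_{1,k},x_{2,1},\dots,x_{n,1}\}$, whose companion $\{x_{2,1},\dots,x_{n,1}\}$ is an edge of the contraction, hence not contained in $W$.

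The delicate remaining subcase, where the careful choice of added vertex matters, has $x_{j,k}\in W$ for every $j\in[n]$; Remark \ref{R-Edge} still forces some $x_{j_0,f_0}$ with $j_0\ge 2$ outside $W$. If $A_1(W)\subsetneq\{2,\dots,k\}$, I would add any missing $x_{1,f}$; the sum $\sum_{j\ge 2}\min(A_j)$ is unchanged and $\min(A_1(W'))\ge 2$, so $\min(A_1)+\sum_{j\ge 2}\min(A_j)\ge 2+(k+n-1)>k+n$. Otherwise $A_1(W)=\{2,\dots,k\}$ with $\min(A_1)=2$; the inequality $\sum_{j\ge 2}\min(A_j)\ge k+n-1$ rules out all $\min(A_j)$ being $1$, so some $j^*\ge 2$ has $\min(A_{j^*}(W))\ge 2$, and I would add $x_{j^*,\min(A_{j^*}(W))-1}$. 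This decreases $\min(A_{j^*})$ by exactly one, giving $2+(\sum_{j\ge 2}\min(A_j)-1)\ge k+n$, with equality when the initial slack is zero. In every case $W\subsetneq W'$ and $W'$ is independent in $\mathcal{H}(\bk)\setminus x_{1,1}$, completing the shedding check.
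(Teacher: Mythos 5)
Your proof is correct and follows the paper's overall skeleton exactly: induction on $n+k$, taking $x_{1,1}$ as the shedding vertex, and identifying $\mathcal{H}(\mathbf{k})/x_{1,1}$ with $F'(k)$ (plus isolated vertices, $F'=\{x_2,\dots,x_n\}$) and $\mathcal{H}(\mathbf{k})\setminus x_{1,1}$ with $F(k-1)$ (plus isolated vertices). Where you diverge is in the verification of the shedding condition. The paper's argument is shorter and uniform: it invokes Remark~\ref{R-Edge} to find $j\ge 2$ with $\{x_{j,1},\dots,x_{j,k}\}\not\subset W$, takes $f$ to be the largest index with $x_{j,f}\notin W$, and shows $W\cup\{x_{j,f}\}$ is independent by a shift contradiction: any edge $\{x_{1,f_1},\dots,x_{n,f_n}\}\subset W\cup\{x_{j,f}\}$ must have $f_1\ge 2$ and $f_j<k$, so replacing $x_{1,f_1}$ by $x_{1,1}$ and $x_{j,f_j}$ by $x_{j,f_j+1}$ (which lies in $W$ by maximality of $f$) yields an edge inside $W\cup\{x_{1,1}\}$, a contradiction. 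Your proof instead encodes $W$ by column slices and chooses the vertex to add by a case analysis on which ``column tops'' $x_{j,k}$ belong to $W$; this is heavier on bookkeeping (three cases, two subcases) and in your case 2 you even add a vertex $x_{1,k}$ in column $1$, which the paper never does. Both arguments succeed; the paper's single shift trick is somewhat slicker, but yours has the merit of making the inequalities on $\min(A_j(W))$ fully explicit, and you are also more careful than the paper in checking $\mathcal{E}''=\emptyset$ for the contraction.
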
	

\begin{proof} 
	We prove the lemma using induction on $N=k+n.$ If $n=1$ or $k =0$, then $\mathcal{H}(\bk)$ is an isolated hypergraph, and hence vertex decomposable. In particular, the proof follows if $N=1.$ 
	
	Now, let us assume that $N>1$ and $n>1.$ First, we prove that $x_{1,1}$ is a shedding vertex of hypergraph $\mathcal{H}(\bk).$ Let $\mathcal{D}=\Delta(\mathcal{H}(\bk))$ and $W\in \link_{\mathcal{D}}(x_{1,1}).$ By Remark \ref{R-Edge}, there exists $2\le j\le n$ such that $\{x_{j,1},\dots,x_{j,k}\}\not\subset W.$
	Suppose $f$ is the largest positive integer such that $x_{j,f} \notin W.$ We claim that $W \cup \{x_{j,f}\} \in\del_{\mathcal{D}}(x_{1,1}).$ Since $x_{1,1} \notin W$, it suffices to prove that $W \cup \{x_{j,f}\} \in \mathcal{D}.$ On the contrary, suppose that $W \cup \{x_{j,f}\} \notin \mathcal{D}.$ Then there exists $\mathbf{f} =(f_1,\dots,f_n)\in [k]^n$ with $|\mathbf{f}| \le k+n-1$ such that $\{x_{1,f_1},\dots,x_{n,f_n}\} \subset W\cup\{x_{j,f}\}.$ This implies that $f = f_j$ and $x_{1,f_1} \in W.$ Since $x_{1,1} \notin W$, we have $f_1 \ge 2.$ This gives 
	\begin{equation}\label{EQ1}
		1+f_2+ \cdots +(f_j+1)+ \cdots +f_n \le |\mathbf{f}| \le k+n-1.
	\end{equation}
	Now, using facts $f_1\ge 2$, $f_i\ge 1$ for all $i$, and $|\mathbf{f}| \le k+n-1$, we get $f_j = f < k.$ This gives 
	$$\{x_{1,1},x_{2,f_2},\dots,x_{j,f_j+1},\dots,x_{n,f_n}\}\subset W\cup\{x_{1,1}\},$$ and hence by Equation \eqref{EQ1} we have $\{x_{1,1},x_{2,f_2},\dots,x_{j,f_j+1},\dots,x_{n,f_n}\} \in \mathcal{E}(\mathcal{H}(\bk)).$ 
	This gives a contradition to the fact $W\cup\{x_{1,1}\} \in \mathcal{D}.$ This proves that $W\cup\{x_{j,f}\} \in \mathcal{D}$, and hence $x_{1,1}$ is a shedding vertex of $\mathcal{H}(\bk).$ 
	 
	Now $\mathcal{H}(\bk)\setminus \{x_{1,1}\}\simeq \mathcal{H}(\bk-{\bf 1})$ and $\mathcal{H}(\bk)/{x_{1,1}}=\mathcal{H}^\prime(\bk)$, where $\mathcal{H}^\prime$ is the hypergraph with the vertex set $\mathcal{V}(\mathcal{H}^\prime)=\{x_2,\dots,x_n\}$ and the edge set  $\mathcal{E}(\mathcal{H}^\prime)=\{\{x_2,\dots,x_n\}\}.$ By induction on $N$, $\mathcal{H}(\bk)\setminus \{x_{1,1}\}$ and $\mathcal{H}(\bk)/{x_{1,1}}$ are vertex decomposable. Therefore, $\mathcal{H}(\bk)$ is vertex decomposable.
\end{proof}

Now, we fix some notation for the rest of the section.

\begin{set}\label{Set-up}
	For $t \ge2$, let $\Delta = \langle F_1,\dots,F_t \rangle$ be a simplicial tree on the vertex set $\mathcal{V}=\{x_1,\dots,x_n\}$ with good leaf order $F_1,\dots,F_t.$ For $i\in [t-1]$, suppose $G_i$ is a branch of $F_i$ in $\Delta_i = \langle F_i,\dots,F_t\rangle.$ Since $F_1$ is a good leaf of $\Delta$, there exists an ordering $i_1,\dots,i_t$ such that 
	$$F_1\cap F_{i_1}\supset F_1\cap F_{i_2}\supset\cdots\supset F_1\cap F_{i_t}.$$ 
	Let $g$ be the largest integer such that $F_1\cap F_{i_g} \neq \emptyset.$
	Consider the hypergraph $\mathcal{H} = \mathcal{H}(\Delta).$ 
	Let $\mathbf{k}_t = (k_1,\dots,k_t)\in \mathbb{N}^t_{>0}$ and 
	$\mathcal{L} =P_0P_1 P_2\dots$ be an infinite string with $P_i\in\{L,D\}.$ Suppose that 
	$\mathcal{L}_s =P_0P_1 P_2\dots P_s$ be its $s$th partial string. We define, recursively, a sequence of triplet $\left(\overline{\mathcal{H}}_{\mathcal{L}_s},A_s,B_s\right)_{s=0}^{\infty}$, where 
	\begin{enumerate}
		\item $\overline{\mathcal{H}}_{\mathcal{L}_s}$ is a hypergraph; and
		
		\item $A_s$ and $B_s$ are sets of vertices.
	\end{enumerate} 
	Define
	\begin{align*}
		\left(\overline{\mathcal{H}}_{\mathcal{L}_0},A_0,B_0\right) & = \left(\mathcal{H}(\mathbf{k}_t),\emptyset,\emptyset\right),
	\end{align*}
    and
	$$
	\begin{array}{ccl}
		\left(\overline{\mathcal{H}}_{\mathcal{L}_1},A_1,B_1\right)&=&
		\left\{\begin{array}{lll}
			\left(\mathcal{H}(\mathbf{k}_t)/{x_{u_1,c_1}},\{x_{u_1}\},\emptyset\right) &~\mbox{if}~P_1 = L;\\
			\left(\mathcal{H}(\mathbf{k}_t)\setminus {x_{u_1,c_1}},\emptyset,\{x_{u_1}\}\right) &~\mbox{otherwise},
		\end{array}\right.
	\end{array}
	$$
	where $x_{u_1} \in F_1\cap F_{i_g}$ and $c_1 =1.$ Let $s \ge 1$ be a integer such that there exists a sequence of vertices $(x_{u_q,c_q})_{q=1}^{s}$ with
	$$
	\begin{array}{ccl}
		\overline{\mathcal{H}}_{\mathcal{L}_q}&=&
		\left\{\begin{array}{lll}
			\overline{\mathcal{H}}_{\mathcal{L}_{q-1}}/{x_{u_q,c_q} }&~\mbox{if}~P_q = L;\\
			\overline{\mathcal{H}}_{\mathcal{L}_{q-1}} \setminus {x_{u_q,c_q} } &~\mbox{otherwise},
		\end{array}\right.
	\end{array}
	$$
	$A_{q} = \{x_{u_p}: P_p = L, p \in [q]\}$ and 
	$B_{q} = \{x_{u_p}: P_p = D, p \in [q]\}$ for all $q \in [s].$
	For $i \in [t]$, let 
	$$k^{(s)}_i=\max\{0,k_i-d^{(s)}_i\}, \text{ where } 
	d^{(s)}_i = -|F_i\cap A_{s}|+\sum_{x_{u_m} \in F_i\cap A_{s}} c_m.$$ 
	For $i \in [t-1]$, consider the following conditions:
	\begin{enumerate}[(i)]
		\item if $F_i\setminus{A_{s}}=\{x_{j_1},\dots,x_{j_b}\}$, then  $\{x_{j_1,f_1},\dots,x_{j_b,f_b}\}$ is an edge in $(\overline{\mathcal{H}}_{\mathcal{L}_{s}})^\circ$ for some $\mathbf{f}=(f_1,\dots,f_b)$ in $[k_i]^b$ with $f_p > c_m$ if $x_{j_p} = x_{u_m} \in B_{s}\setminus A_{s}$ and $|\mathbf{f}| \le k_i^{(s)}+b-1$; and
		
		\item $F_i\cap G_i\not{\subset}A_{s}.$
	\end{enumerate}
	Let $U_{s}$ be the set of all $i\in [t-1]$ satisfying above conditions. Assume that $U_{s}$ is non-empty. Let $\ell_{s}$ be the smallest element of $U_{s}$ and $(x_{w_1},\dots,x_{w_r})$ be a sequence of good vertices of $F_{\ell_{s}}$ in $\Delta_{\ell_{s}}.$ Now, set $j_0=\min\{j:x_{w_j}\not\in A_{s}\}$ and take $x_{u_{s+1}} = x_{w_{j_0}}.$ We define
	$$
	\begin{array}{ccl}
		\left(\overline{\mathcal{H}}_{\mathcal{L}_{s+1}},A_{s+1},B_{s+1}\right)&=&
		\left\{\begin{array}{lll}
			\left(\overline{\mathcal{H}}_{\mathcal{L}_{s}}
			/{x_{u_{s+1},c_{s+1}}},
			A_{s}\cup
			\{x_{u_{s+1}}\},B_{s}\right)&~\mbox{if}~P_s=L\\
			\left(\overline{\mathcal{H}}_{\mathcal{L}_{s}}
			\setminus x_{u_{s+1},c_{s+1}},A_{s},
			B_{s}\cup \{x_{u_{s+1}}\}\right)&~\mbox{otherwise},
		\end{array}\right.
	\end{array}
	$$
	where $c_{s+1}$ is the smallest positive integer such that $x_{u_{s+1},c_{s+1}}\in \mathcal{V}((\overline{\mathcal{H}}_{\mathcal{L}_{s}})^\circ).$
	On the other hand, if $U_{s} =\emptyset$, then we define 
	$\left(\overline{\mathcal{H}}_{\mathcal{L}_{q}},A_{q},B_{q}\right) = \left(\overline{\mathcal{H}}_{\mathcal{L}_{s}},A_{s},B_{s}\right)$ for all $q \ge s.$
\end{set}

\begin{rmk}\label{R-BasicRemark1}
	With notation as in Set-up \ref{Set-up}, we observe the following.
	\begin{enumerate}[(a)]
		\item Since $\mathcal{V}(\mathcal{H}(\bk_t))$ is finite,
		it follows from Set-up \ref{Set-up} that there exists unique integer 
		$\alpha (\mathcal{L})$ such that
		$$
		\begin{array}{ccl}
			\left(\overline{\mathcal{H}}_{\mathcal{L}_{s}},A_s,B_s\right)&=&
			\left\{\begin{array}{lll}
				\left((\overline{\mathcal{H}}_{\mathcal{L}_{s-1}})/{x_{u_s,c_s}},A_{s-1}\cup\{x_{u_s}\},B_{s-1}\right)&~\mbox{if}~P_s=L\\
				\left((\overline{\mathcal{H}}_{\mathcal{L}_{s-1}})\setminus x_{u_s,c_s},A_{s-1},B_{s-1}\cup\{x_{u_s}\}\right)&~\mbox{otherwise}
			\end{array}\right.
		\end{array}
		$$
		for all $s \in [\alpha(\mathcal{L})]$ and $$\left(\overline{\mathcal{H}}_{\mathcal{L}_{s}},A_s,B_s\right) =\left(\overline{\mathcal{H}}_{\mathcal{L}_{\alpha(\mathcal{L})}},
		A_{\alpha(\mathcal{L})},B_{\alpha(\mathcal{L})}\right)$$ 
		for all 
		$s \ge \alpha(\mathcal{L}).$ Further, $U_s \ne \emptyset$ for all 
		$s \in[\alpha(\mathcal{L})-1]$ and $U_{\alpha(\mathcal{L})} =\emptyset.$
		
		\item Let $s \in[\alpha(\mathcal{L})-1]\cup\{0\}.$ Suppose that 
		$F_{\ell_s}\setminus A_{s} =\{x_{u_{s+1}}=x_{h_1},\dots,x_{h_a}\}$, where $$
		\begin{array}{ccl}
			l_{s}& =&
			\left\{\begin{array}{lll}
				\min\{i:i\in U_s\}&~\mbox{if}~ s\in[\alpha(\mathcal{L})-1]\\
				1&~\mbox{if}~ s =0.
			\end{array}\right.
		\end{array}
		$$
		Then $\{x_{h_1,e_1},\dots,x_{h_a,e_a}\}$ is an edge in $(\overline{\mathcal{H}}_{\mathcal{L}_s})^\circ$ for some $\mathbf{e} =(e_1,\dots,e_a)\in [k_{\ell_s}]^a$
		with $e_p > c_m$ if $x_{h_p} = x_{u_m} \in B_{\ell_s}\setminus A_{\ell_s}$ and $|\mathbf{e}| \le k_{\ell_s}^{(s)}+a-1.$
    \end{enumerate}	
\end{rmk}

\begin{rmk}\label{R-BasicRemark2}
    With notation as in Set-up \ref{Set-up} and Remark \ref{R-BasicRemark1}, let $s \in[\alpha(\mathcal{L})-1].$
    \begin{enumerate}[(a)]
		\item Let $x_{u_{s+1}} = x_{u_q}$ for some $q \le s$ with $P_q = D.$ Since
        $x_{u_{s+1},c_{s+1}} \in \mathcal{V}((\overline{\mathcal{H}}_{\mathcal{L}_{s}})^\circ)$ and 
        $$\mathcal{V}((\overline{\mathcal{H}}_{\mathcal{L}_{s}})^\circ) \subset \mathcal{V}((\overline{\mathcal{H}}_{\mathcal{L}_{q-1}})^\circ),$$ we get 
        $c_{s+1} > c_m.$
		
		\item Let $E \in \mathcal{E}(\overline{\mathcal{H}}_{\mathcal{L}_{s}})$ with $x_{u_{s+1},c_{s+1}} \in E.$ Since $x_{u_{s+1},c_{s+1}} \in \mathcal{V}((\overline{\mathcal{H}}_{\mathcal{L}_{s}})^\circ)$, we get
		$E \in \mathcal{E}((\overline{\mathcal{H}}_{\mathcal{L}_{s}})^\circ).$
	\end{enumerate}
\end{rmk}

With notation as in Set-up \ref{Set-up} and Remark \ref{R-BasicRemark1}, we have defined $k^{(s)}_i$ for 
$s \in[\alpha(\mathcal{L})].$ For $s \ge \alpha(\mathcal{L})$, we define 
$k^{(s)}_i=k^{(\alpha(\mathcal{L}))}_i.$  
Further, a non-empty subset $E =\{x_{j_1,f_1},\dots,x_{j_b,f_b}\}$ of
$\mathcal{V}(\mathcal{H}(\bk_t))$ is said to be \emph{constructible} in $\overline{\mathcal{H}}_{\mathcal{L}_s}$ if there exists $i \in [t]$ such that the following conditions hold:    
\begin{enumerate}[(a)]
	\item $\{x_{j_1},\dots,x_{j_b}\} = F_i\setminus A_s$; and
	
	\item $\mathbf{f} =(f_1,\dots,f_b) \in [k_i]^b$ with $f_p >c_m$ if 
	$x_{j_p} = x_{u_m} \in B_s\setminus A_s$ and $|\mathbf{f}| \le k_i^{(s)}+b-1.$
\end{enumerate}
 
\vspace{0.3cm}
The following is an illustrative example for Set-up \ref{Set-up}.\\

\begin{minipage}{\linewidth}
	\hspace{-0.5cm}	
	\begin{minipage}{0.6\linewidth}
		\begin{exmp}\label{E-illustrative}
			Let $\Delta$ be a simplicial tree as shown in Figure \ref{Fig 2} with facets $F_1 = \{x_1,x_2,x_3\}$, $F_2 = \{x_1,x_4,x_5,x_6\}$ and 
			$F_3 = \{x_5,x_6,x_7,x_8\}.$ With notation as in Set-up \ref{Set-up}, let $\mathcal{L} = P_0 P_1 P_2 \dots$ be an infinite string such that $P_2 =D$ and $P_1 =P_3 =P_4 =L.$ Let $\mathbf{k}_3=(1,2,2)\in \mathbb{N}^3_{>0}$ and $\mathcal{H} =\mathcal{H}(\Delta)$ be the hypergraph associated to $\Delta.$
			We calculate the sequence $(\overline{\mathcal{H}}_{\mathcal{L}_s})_{s=0}^{\infty}.$ By definition, $\overline{\mathcal{H}}_{\mathcal{L}_0}=\mathcal{H}(\bk_3).$ Since $x_{u_1} = x_1$, we have 
		\end{exmp}
	\end{minipage}
    \hspace{1cm}
	\begin{minipage}{0.30\linewidth}
		\begin{figure}[H]
			\begin{tikzpicture}[scale=0.6][line cap=round,line join=round,>=triangle 45,x=1.0cm,y=1.0cm]
				\fill[lightgray] (0,0) -- (1,2) -- (2,0) -- cycle;
				\fill[lightgray] (5,2) -- (3.86,0.6) -- (5,0) -- cycle;
				\fill[lightgray] (2,0) -- (5,0) -- (3.86,0.6) -- (5,2) -- cycle;
				\fill[lightgray] (5,2) -- (5,0) -- (8,0) -- (6.14,0.6) -- cycle;
				\fill[lightgray] (6.14,0.6) -- (8,0) -- (5,2) -- cycle;
				\draw (0,0)-- (1,2);
				\draw (1,2)-- (2,0);
				\draw (2,0)-- (0,0);
				\draw (2,0)-- (3.86,0.6);
				\draw (3.86,0.6)-- (5,0);
				\draw (5,0)-- (2,0);
				\draw (5,0)-- (5,2);
				\draw (5,2)-- (3.86,0.6);
				\draw (5,2)-- (2,0);
				\draw (6.14,0.6)-- (5,0);
				\draw (6.14,0.6)-- (8,0);
				\draw (6.14,0.6)-- (5,2);
				\draw (5,2)-- (8,0);
				\draw (5,0)-- (8,0);
				
				\begin{scriptsize}
					\fill  (0,0) circle (1.5pt);
					\draw[below] (0,0) node {$x_2$};
					\fill  (2,0) circle (1.5pt);
					\draw[below] (2,0) node {$x_1$};
					\fill  (1,2) circle (1.5pt);
					\draw[above] (1,2) node {$x_3$};
					\fill  (3.86,0.6) circle (1.5pt);
					\draw[right] (3.86,0.6) node {$x_4$};
					\fill  (5,0) circle (1.5pt);
					\draw[below] (5,0) node {$x_6$};
					\fill  (5,2) circle (1.5pt);
					\draw[above] (5,2) node {$x_5$};
					\fill  (6.14,0.6) circle (1.5pt);
					\draw[left] (6.14,0.6) node {$x_7$};
					\fill (8,0) circle (1.5pt);
					\draw[below] (8,0) node {$x_8$};
				\end{scriptsize}
			\end{tikzpicture}
			\caption{}
			\label{Fig 2}
		\end{figure}
	\end{minipage}
\end{minipage}
\\\\

    $\mathcal{E}(\overline{\mathcal{H}}_{\mathcal{L}_1})=
    \left\{\begin{array}{ll}
	    &\{x_{2,1},x_{3,1}\},\{x_{4,1},x_{5,1},x_{6,1}\},
	    \{x_{4,2},x_{5,1},x_{6,1}\}, \{x_{4,1},x_{5,2},x_{6,1}\}, \\
	    &
	    \{x_{4,1},x_{5,1},x_{6,2}\},\{x_{5,1},x_{6,1},x_{7,1},x_{8,1}\},
	    \{x_{5,2},x_{6,1},x_{7,1},x_{8,1}\}, \\
	    &
	    \{x_{5,1},x_{6,2},x_{7,1},x_{8,1}\},\{x_{5,1},x_{6,1},x_{7,2},x_{8,1}\},
	    \{x_{5,1},x_{6,1},x_{7,1},x_{8,2}\}
    \end{array}\right\}.$\\\\
    By Set-up \ref{Set-up}, $A_1 =\{x_1\}$ and $B_1 =\emptyset.$
    We find that $\ell_1=2.$ Since $(x_5,x_6)$ is a sequence of good vertices of good leaf $F_2$ in $\Delta_2 =\langle F_2,F_3\rangle$, we have 
    $x_{u_2,c_2} =x_{5,1}.$ Hence $\overline{\mathcal{H}}_{\mathcal{L}_2}$ is a hypergraph with the edge set
    $$
    \mathcal{E}(\overline{\mathcal{H}}_{\mathcal{L}_2})=
    \Bigl\{\{x_{2,1},x_{3,1}\},\{x_{4,1},x_{5,2},x_{6,1}\},
    \{x_{5,2},x_{6,1},x_{7,1},x_{8,1}\}\Bigr\}.
    $$
    We have $A_2 =\{x_1\}$ and $B_2 =\{x_5\}.$
    Further, $\ell_2 =2$, we get $x_{u_3,c_3} =x_{5,2}.$ Hence $\overline{\mathcal{H}}_{\mathcal{L}_3}$ is a hypergraph with the edge set
    $$
    \mathcal{E}(\overline{\mathcal{H}}_{\mathcal{L}_3})=
    \Bigl\{\{x_{2,1},x_{3,1}\},\{x_{4,1},x_{6,1}\},\{x_{6,1},x_{7,1},x_{8,1}\}
    \Bigr\}.
    $$
    This gives us $A_3 =\{x_1,x_5\}$ and $B_3 =\{x_5\}.$ 
    Now, we have $\ell_3 =2$, and hence $x_{u_4,c_4} =x_{6,1}.$ Therefore, $\overline{\mathcal{H}}_{\mathcal{L}_4}$ is a hypergraph with the edge set
    $$
    \mathcal{E}(\overline{\mathcal{H}}_{\mathcal{L}_4})=
    \Bigl\{\{x_{2,1},x_{3,1}\},\{x_{4,1}\},\{x_{7,1},x_{8,1}\}\Bigr\}.
    $$
    As before, $A_4 =\{x_1,x_5,x_6\}$ and $B_4 =\{x_5\}.$ Further, we observe that $U_4 =\emptyset.$ Therefore, we have 
    $\overline{\mathcal{H}}_{\mathcal{L}_s}=\overline{\mathcal{H}}_{\mathcal{L}_4}$
    for all $s \ge 4.$
    Note that $x_{u_2} = x_{u_3}.$ Thus, it is possible that $x_{u_p} =x_{u_q}$ for some $p\neq q.$\\

With the notation of Example \ref{E-illustrative}, observe that $x_{1,2} \in \mathcal{V}(\overline{\mathcal{H}}_{\mathcal{L}_1})$ but 
$\{x_{1,2}\} \notin \mathcal{E}(\overline{\mathcal{H}}_{\mathcal{L}_1}).$ Also, observe that 
$x_{6,2} \in \mathcal{V}(\overline{\mathcal{H}}_{\mathcal{L}_2})$ but 
$\{x_{6,2}\} \notin \mathcal{E}(\overline{\mathcal{H}}_{\mathcal{L}_2}).$
Thus, the operations of contraction and deletion may produce such isolated vertices which do not belong to an edge.
We keep this observation in mind and give the following characterization of the edge set of the hypergraph  $\overline{\mathcal{H}}_{\mathcal{L}_s}.$

\begin{lem}\label{L-Constructible}
    With notation as in Set-up \ref{Set-up}, suppose that
    $E$ is an edge in $\overline{\mathcal{H}}_{\mathcal{L}_s}.$ Then $E$ is constructible in $\overline{\mathcal{H}}_{\mathcal{L}_s}.$ 
    Conversely, if $E$ is constructible in $\overline{\mathcal{H}}_{\mathcal{L}_s}$, then there exists an edge $E'$ in $\overline{\mathcal{H}}_{\mathcal{L}_s}$ such that $E'\subset E.$
\end{lem}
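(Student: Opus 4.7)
My plan is to prove both implications by simultaneous induction on $s$. The base case $s = 0$ is immediate: $\overline{\mathcal{H}}_{\mathcal{L}_0} = \mathcal{H}(\bk_t)$, $A_0 = B_0 = \emptyset$, $k_i^{(0)} = k_i$, and the definition of constructibility reduces verbatim to the definition of an edge of $\mathcal{H}(\bk_t)$, so both directions hold with $E' = E$. As a preliminary bookkeeping observation, the $L$-indices produce pairwise distinct $x_{u_p}$'s: if $P_p = P_q = L$ with $p < q$ then $x_{u_p} \in A_{q-1}$, whereas the selection rule forces $x_{u_q} \in F_{\ell_{q-1}} \setminus A_{q-1}$. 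Together with Remark \ref{R-BasicRemark2}(a), this ensures that $k_i^{(s)}$ is well-defined and that the constraints $f_p > c_m$ stay consistent across steps.

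For the deletion step ($P_{s+1} = D$), both $k_i^{(s+1)} = k_i^{(s)}$ and $F_i \setminus A_{s+1} = F_i \setminus A_s$ are unchanged, and the only new constructibility constraint is $f_p > c_{s+1}$ whenever $x_{j_p} = x_{u_{s+1}}$. For the forward direction, an edge $E$ of $\overline{\mathcal{H}}_{\mathcal{L}_{s+1}}$ containing some $x_{u_{s+1},f_p}$ must satisfy $f_p \ge c_{s+1}$, because $c_{s+1}$ is by definition the smallest index with $x_{u_{s+1},c_{s+1}}$ non-isolated in $\overline{\mathcal{H}}_{\mathcal{L}_s}$; combined with $x_{u_{s+1},c_{s+1}} \notin E$ this sharpens to $f_p > c_{s+1}$. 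For the converse, the same inequality is already built into the $(s+1)$-constructibility of $E$, so $x_{u_{s+1},c_{s+1}} \notin E$, and the edge $E^* \subset E$ supplied by the inductive hypothesis at step $s$ automatically survives the deletion.

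For the contraction step ($P_{s+1} = L$), the key bookkeeping identity is $k_i^{(s+1)} = k_i^{(s)} - c_{s+1} + 1$ for facets $F_i \ni x_{u_{s+1}}$, nonnegative because the existence of any constructible witness forces $c_{s+1} \le k_i^{(s)}$. For the forward direction, edges of the form $E = E^* \setminus \{x_{u_{s+1},c_{s+1}}\}$ with $E^* \in \mathcal{E}(\overline{\mathcal{H}}_{\mathcal{L}_s})$ containing the contracted vertex transfer constructibility directly. For edges $E \in \mathcal{E}''$ (those untouched by the contraction), I rule out the possibility $x_{u_{s+1}} \in F_i \setminus A_s$ by a swap argument: replacing $x_{u_{s+1},f_p}$ in $E$ with $x_{u_{s+1},c_{s+1}}$ yields a constructible set $E^*$, and the inductive converse furnishes an edge $E^{**} \subset E^*$ that either contradicts $E \in \mathcal{E}''$ (if $x_{u_{s+1},c_{s+1}} \in E^{**}$) or forces a strict inclusion $E^{**} \subsetneq E$ violating simplicity of $\overline{\mathcal{H}}_{\mathcal{L}_s}$. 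For the backward direction with $x_{u_{s+1}} \in F_i$, I enlarge $E$ to $E^* = E \cup \{x_{u_{s+1},c_{s+1}}\}$, verify its constructibility at step $s$ via the identity above, and descend the inductive edge $E^{**} \subset E^*$ through $\mathcal{E}'$ or $\mathcal{E}''$ to obtain an edge contained in $E$; this is precisely the mechanism through which the strict containment $E' \subsetneq E$ in the conclusion can occur.

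The main obstacle I anticipate is the bookkeeping in the contraction case: carefully verifying that $k_i^{(s+1)} \ge 1$ exactly when needed, and running the swap argument in Case $\mathcal{E}''$ so that the inclusion needed to invoke simplicity is genuinely strict. Situations where the same underlying vertex $x_{u_p}$ is revisited at both an $L$-step and a $D$-step, as in Example \ref{E-illustrative}, are the most delicate and require repeated appeal to Remark \ref{R-BasicRemark2}, but the argument goes through once one carefully tracks whether the vertex currently lies in $A_s$, $B_s$, or both.
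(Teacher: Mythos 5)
Your plan is essentially the same proof as the paper's: induction on $s$, with a case split on whether the step is a deletion ($P_s = D$) or a contraction ($P_s = L$), and the same key maneuver in the contraction case of the forward direction — a swap of $x_{u_s,f_p}$ for $x_{u_s,c_s}$ to manufacture a constructible set, then invoking the inductive converse together with Remark~\ref{R-ContRemark} to produce an edge strictly contained in $E$ and thereby contradicting that $E$ is an edge of a simple hypergraph. The paper collapses your two sub-cases (whether $x_{u_{s+1},c_{s+1}}$ lies in the inductive edge or not) into a single step by passing the inductive edge through Remark~\ref{R-ContRemark} directly, and you should state the identity $k_i^{(s)} = k_i^{(s-1)} - c_s + 1$ with the $\max\{0,\cdot\}$ in mind, but these are cosmetic; the argument is the one the paper runs.
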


\begin{proof}
    We prove the result using induction on $s.$ 
    By construction of hypergraph $\mathcal{H}(\bk_t)$, the result holds for $s =0.$ Let 
    $s >0$ and the result holds for $s-1.$ If $s >\alpha(\mathcal{L})$, then $\overline{\mathcal{H}}_{\mathcal{L}_s} =\overline{\mathcal{H}}_{\mathcal{L}_{s-1}}$, and hence result follows by using induction on $s.$
    Now, suppose $s \le\alpha(\mathcal{L}).$ Then there are two possibilities.
    
    \noindent
    {\bf Case 1.} When $P_s=L.$ By Set-up \ref{Set-up}, we have $A_s=A_{s-1}\cup\{x_{u_s}\}$ and $B_s=B_{s-1}.$ 
    Now, let $E =\{x_{j_1,f_1},\dots,x_{j_b,f_b}\} \in \mathcal{E}(\overline{\mathcal{H}}_{\mathcal{L}_s}).$ Then either $E$ or $E\cup \{x_{u_s,c_s}\}$ is an edge in $\overline{\mathcal{H}}_{\mathcal{L}_{s-1}}.$
	If $E \in \mathcal{E}(\overline{\mathcal{H}}_{\mathcal{L}_{s-1}})$, then by induction on $s$, $E$ is constructible in $\overline{\mathcal{H}}_{\mathcal{L}_{s-1}}$, and hence there exists $i \in [t]$ such that $\{x_{j_1},\dots,x_{j_b}\} = F_i\setminus A_{s-1}$ and $\mathbf{f} =(f_1,\dots,f_b) \in [k_i]^b$ with 
	$$f_p>c_m ~\mbox{if}~ x_{j_p} = x_{u_m} \in B_{s-1}\setminus A_{s-1} ~\mbox{and}~ |\mathbf{f}| \le k_i^{(s-1)}+b-1.$$
	We claim that $x_{u_s} \notin F_i\setminus A_{s-1}.$ On the contrary, assume that $x_{u_s} \in F_i\setminus A_{s-1}$ and $x_{u_s} = x_{j_1}.$ Let $F =\{x_{j_1,c_s},x_{j_2,f_2},\dots,x_{j_b,f_b}\}.$ Now, by using $f_1\ge c_s$, we get
    $$|\mathbf{f}|-f_1+c_s \le k_i^{(s-1)}+b-1.$$
    Thus, by Remark \ref{R-BasicRemark2}(a), $F$ is constructible in $\overline{\mathcal{H}}_{\mathcal{L}_{s-1}}.$ Using induction on $s$, there exists an edge $F'\in \mathcal{E}(\overline{\mathcal{H}}_{\mathcal{L}_{s-1}})$ such that $F' \subset F.$ 
    Now, by Remark \ref{R-ContRemark}, there exists an edge $E' \in \mathcal{E}(\overline{\mathcal{H}}_{\mathcal{L}_s})$ such that 
    $E'\subset F'\setminus \{x_{u_s,c_s}\} \subset E.$ This implies that $E$ is not an edge in 
    $\overline{\mathcal{H}}_{\mathcal{L}_s}$, a contradiction. Thus, 
    $x_{u_s}\notin F_i\setminus A_{s-1}.$ Now, we have 
    $$F_i\setminus A_s = F_i\setminus A_{s-1} ~\mbox{and}~ F_i\cap A_s = F_i\cap A_{s-1}.$$ 
    Thus, we get $k_i^{(s)} = k_i^{(s-1)}.$ This implies that $|\mathbf{f}| \le k_i^{(s)}+b-1.$
    Further, if $x_{j_p} = x_{u_m} \in B_s\setminus A_s$, then $x_{j_p} = x_{u_m}\in B_{s-1}\setminus A_{s-1}$, and hence $f_p> c_m.$ Thus, $E$ is constructible in $\overline{\mathcal{H}}_{\mathcal{L}_s}.$
    On the other hand, if $E\cup \{x_{u_s,c_s}\} \in \mathcal{E}(\overline{\mathcal{H}}_{\mathcal{L}_{s-1}})$, then by induction on $s$, $E\cup \{x_{u_s,c_s}\}$ is constructible in $\overline{\mathcal{H}}_{\mathcal{L}_{s-1}}$, and hence there exists $i \in [t]$ such that $\{x_{j_1},\dots,x_{j_b},x_{u_s}\} = F_i\setminus A_{s-1}$ and $\mathbf{f} =(f_1,\dots,f_b) \in [k_i]^b$ with 
    $$f_p>c_m~ \mbox{if}~ x_{j_p} = x_{u_m} \in B_{s-1}\setminus A_{s-1} ~\mbox{and}~ |\mathbf{f}|+c_s \le k_i^{(s-1)}+b.$$ Thus, we have $\{x_{j_1},\dots,x_{j_b}\} = F_i\setminus A_s.$ Note that $k_i^{(s)} = k_i^{(s-1)}-c_s+1.$ Therefore, we get $|\mathbf{f}| \le k_i^{(s)}+b-1.$ Now, the fact $B_{s}\setminus A_{s} \subset B_{s-1}\setminus A_{s-1}$ implies that $E$ is constructible in $\overline{\mathcal{H}}_{\mathcal{L}_s}.$ 
	
	Conversely, let $E =\{x_{j_1,f_1},\dots,x_{j_b,f_b}\}$ is constructible in $\overline{\mathcal{H}}_{\mathcal{L}_s}.$ Then there exists $i \in [t]$ such that $\{x_{j_1},\dots,x_{j_b}\} = F_i\setminus A_s$ and $\mathbf{f} =(f_1,\dots,f_b) \in [k_i]^b$ with 
	$$f_p>c_m ~\mbox{if}~ x_{j_p} = x_{u_m} \in B_s\setminus A_s ~\mbox{and}~ |\mathbf{f}| \le k_i^{(s)}+b-1.$$ 
	If $x_{j_p} = x_{u_m}\in B_{s-1}\setminus A_{s-1}$, then $x_{j_p} \in B_s\setminus A_s$ as $x_{j_p} \neq x_{u_s}.$ Thus, we have $f_p > c_m$ if $x_{j_p} = x_{u_m} \in B_{s-1}\setminus A_{s-1}.$ Now, we have either $x_{u_s} \in F_i$ or $x_{u_s} \notin F_i.$ If $x_{u_s} \in F_i$, then we get $F_i\setminus A_{s-1} = \{x_{j_1},\dots,x_{j_b},x_{u_s}\}$ and $k_i^{(s-1)} = k_i^{(s)}+c_s-1.$ This implies that $$|\mathbf{f}|+c_s \le k_i^{(s-1)}+b.$$ By Remark \ref{R-BasicRemark2}(a), $E \cup \{x_{u_s,c_s}\}$ is constructible in $\overline{\mathcal{H}}_{\mathcal{L}_{s-1}}$, and hence by induction on $s$, there exists an edge $H \in \mathcal{E}(\overline{\mathcal{H}}_{\mathcal{L}_{s-1}})$ such that $H \subset E \cup \{x_{u_s,c_s}\}.$ Now, Remark \ref{R-ContRemark} implies that there exists an edge $E' \in \mathcal{E}(\overline{\mathcal{H}}_{\mathcal{L}_s})$ such that $E' \subset H\setminus \{x_{u_s,c_s}\} \subset E.$ On the other hand, if $x_{u_s} \notin F_i$, then 
	$$F_i\setminus A_{s-1} = F_i\setminus A_s ~\mbox{and}~ k_i^{(s-1)} = k_i^{(s)}.$$ 
	Now, $|\mathbf{f}| \le k_i^{(s-1)}+b-1$ implies that $E$ is constructible in $\overline{\mathcal{H}}_{\mathcal{L}_{s-1}}.$ By induction on $s$, there exists an edge $H \in \mathcal{E}(\overline{\mathcal{H}}_{\mathcal{L}_{s-1}})$ such that $H\subset E.$ Note that $x_{u_s,c_s}\notin H.$ Thus, by using Remark \ref{R-ContRemark}, there exists an edge $E' \in \mathcal{E}(\overline{\mathcal{H}}_{\mathcal{L}_s})$ such that $E'\subset H \subset E.$
    
    \noindent
    {\bf Case 2.} When $P_s=D.$ By Set-up \ref{Set-up}, $A_s=A_{s-1}$ and $B_s=B_{s-1}\cup\{x_{u_s}\}.$ Hence $k_i^{(s)} = k_i^{(s-1)}$ for all 
    $i \in [t].$ 
    Let $E =\{x_{j_1,f_1},\dots,x_{j_b,f_b}\}$ be an edge in $\overline{\mathcal{H}}_{\mathcal{L}_s}.$ Then $E \in \mathcal{E}(\overline{\mathcal{H}}_{\mathcal{L}_{s-1}})$ and $x_{u_s,c_s}\notin E.$ Thus, by induction on $s$, $E$ is constructible in $\overline{\mathcal{H}}_{\mathcal{L}_{s-1}}$, and hence there exists $i \in [t]$
    such that $\{x_{j_1},\dots,x_{j_b}\} = F_i\setminus A_{s-1}$ and $\mathbf{f}=(f_1,\dots,f_b) \in [k_i]^b$ with 
    $$f_p> c_m ~\mbox{if}~ x_{j_p} = x_{u_m} \in B_{s-1}\setminus A_{s-1} ~\mbox{and}~ |\mathbf{f}| \le k_i^{(s-1)}+b-1.$$
    Further, since $F_i\setminus A_{s-1} =F_i\setminus A_s$ and $k_i^{(s)} = k_i^{(s-1)}$, we have only to show that $f_p > c_s$ if $x_{j_p} =x_{u_s}.$ It follows from the fact that $x_{u_s,c_s} \notin E.$ Therefore, $E$ is constructible in $\overline{\mathcal{H}}_{\mathcal{L}_s}.$
    
    Conversely, let $E =\{x_{j_1,f_1},\dots,x_{j_b,f_b}\}$ is constructible in $\overline{\mathcal{H}}_{\mathcal{L}_s}.$ Then there exists $i \in [t]$ such that $\{x_{j_1},\dots,x_{j_b}\} = F_i\setminus A_s$ and $\mathbf{f} =(f_1,\dots,f_b) \in [k_i]^b$ with 
    $$f_p>c_m ~\mbox{if}~ x_{j_p} = x_{u_m} \in B_s\setminus A_s ~\mbox{and}~ |\mathbf{f}| \le k_i^{(s)}+b-1.$$ 
    If $x_{j_p} = x_{u_m}\in B_{s-1}\setminus A_{s-1}$, then $x_{j_p} \in B_s\setminus A_s$ implies that $f_p >c_m.$ Now, by using the fact that $k_i^{(s)} = k_i^{(s-1)}$, we get $E$ is constructible in $\overline{\mathcal{H}}_{\mathcal{L}_{s-1}}.$ By induction on $s$, there exists an edge $E' \in \mathcal{E}(\overline{\mathcal{H}}_{\mathcal{L}_{s-1}})$ such that $E' \subset E.$ Since $x_{u_s,c_s} \notin E$, we get $E' \in \mathcal{E}(\overline{\mathcal{H}}_{\mathcal{L}_s}).$ 
\end{proof}

In view of Lemma ~\ref{L-Constructible}, $\{x_{j,f}\}$ is constructible in $\overline{\mathcal{H}}_{\mathcal{L}_s}$ if and only if $\{x_{j,f}\} \in \mathcal{E}(\overline{\mathcal{H}}_{\mathcal{L}_s}).$

The following lemmas are useful in proving Theorem \ref{Th-MainTheorem}.

\begin{lem}\label{L-Edge1}\rm
    With notation as in Set-up \ref{Set-up} and Remark \ref{R-BasicRemark1}, let 
    $s \in[\alpha(\mathcal{L})-1].$ 
    Suppose that $|(F_{\ell_s}\cap G_{\ell_s})\setminus A_s| =1.$ 
    Then $\{x_{h_1,c_{s+1}},x_{h_2,e_2},\dots,x_{h_a,e_a}\}$ is an edge in $(\overline{\mathcal{H}}_{\mathcal{L}_s})^\circ.$
\end{lem}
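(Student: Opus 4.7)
The plan is to lower the first coordinate of an already-known edge down to $c_{s+1}$, and then to check that the resulting set is genuinely an edge. By Remark \ref{R-BasicRemark1}(b), there exists an edge $E_0 = \{x_{h_1,e_1}, x_{h_2,e_2}, \ldots, x_{h_a,e_a}\}$ in $(\overline{\mathcal{H}}_{\mathcal{L}_s})^\circ$ with $\mathbf{e}\in[k_{\ell_s}]^a$, $|\mathbf{e}|\le k_{\ell_s}^{(s)}+a-1$, and $e_p > c_m$ whenever $x_{h_p}=x_{u_m}\in B_{\ell_s}\setminus A_{\ell_s}$. Since $x_{u_{s+1},e_1} = x_{h_1,e_1}$ lies in $\mathcal{V}((\overline{\mathcal{H}}_{\mathcal{L}_s})^\circ)$ and $c_{s+1}$ is the smallest positive integer with this property, we have $c_{s+1}\le e_1$.

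Set $E_1 := \{x_{h_1,c_{s+1}}, x_{h_2,e_2}, \ldots, x_{h_a,e_a}\}$. The next step is to verify that $E_1$ is constructible in $\overline{\mathcal{H}}_{\mathcal{L}_s}$ via the facet $F_{\ell_s}$. The vertex set is exactly $F_{\ell_s}\setminus A_s$; the exponent sum $c_{s+1}+e_2+\cdots+e_a \le e_1+e_2+\cdots+e_a \le k_{\ell_s}^{(s)}+a-1$; for $p\ge 2$ the inequalities $e_p>c_m$ are inherited from $E_0$; and for $p=1$, if $x_{u_{s+1}}=x_{u_m}\in B_s\setminus A_s$ (so $P_m=D$ with $m\le s$), then Remark \ref{R-BasicRemark2}(a) gives $c_{s+1} > c_m$. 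Applying Lemma \ref{L-Constructible} then produces an edge $E'\subseteq E_1$ with $E'\in\mathcal{E}(\overline{\mathcal{H}}_{\mathcal{L}_s})$. Since $E'$ contains the non-isolated vertex $x_{h_1,c_{s+1}}$, Remark \ref{R-BasicRemark2}(b) places $E'$ in $\mathcal{E}((\overline{\mathcal{H}}_{\mathcal{L}_s})^\circ)$.

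The heart of the proof is to rule out $E'\subsetneq E_1$. Suppose this strict containment holds. Lemma \ref{L-Constructible} applied to $E'$ yields a facet $F_{i'}$ with $F_{i'}\setminus A_s \subsetneq F_{\ell_s}\setminus A_s$. Since facets are pairwise incomparable, $i'\ne\ell_s$. If $i' > \ell_s$, then $F_{i'}\in\mathcal{F}(\Delta_{\ell_s})$, so by the branch property $F_{i'}\cap F_{\ell_s}\subseteq F_{\ell_s}\cap G_{\ell_s}$; combining this with $F_{i'}\setminus A_s\subseteq F_{\ell_s}$ and the hypothesis $|(F_{\ell_s}\cap G_{\ell_s})\setminus A_s|=1$ forces $F_{i'}\setminus A_s\subseteq\{x_{u_{s+1}}\}$, so that $E' = \{x_{u_{s+1},c_{s+1}}\}$. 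If $i' < \ell_s$, then $i'\notin U_s$, yet $E'$ itself witnesses condition (i) for $F_{i'}$, so condition (ii) must fail, giving $F_{i'}\cap G_{i'}\subseteq A_s$; this will be shown to be incompatible with the recursive good-vertex selection that produced $A_s$, since the sequence of good vertices used at each earlier step only consumes elements of $F_{\ell_q}\cap G_{\ell_q}$ for $\ell_q\ge\ell_s$ prior to $s$.

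The main obstacle is the first case: excluding that the singleton $\{x_{u_{s+1},c_{s+1}}\}$ is a genuine edge of $\overline{\mathcal{H}}_{\mathcal{L}_s}$ when $a\ge 2$. The hypothesis $|(F_{\ell_s}\cap G_{\ell_s})\setminus A_s|=1$ localises the $F_{\ell_s}$–overlap of every other facet of $\Delta_{\ell_s}$ to the single vertex $x_{u_{s+1}}$, but one still has to control the accumulated contractions so as to prevent a distant facet $F_{i'}$ from collapsing to this singleton. I expect the resolution to hinge on the constructibility budget: if such an $F_{i'}$ existed, then $c_{s+1}\le k_{i'}^{(s)}$ and the corresponding singleton would already have appeared at an earlier step, so that $x_{u_{s+1}}$ would have been chosen as a good vertex of $F_{i'}$ before step $s+1$, contradicting $x_{u_{s+1}}\notin A_s$. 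Combined with the good-leaf order of $\Delta$ (which is what provides the branch $G_{\ell_s}$), this is expected to close the argument.
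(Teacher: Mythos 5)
Your overall plan matches the paper's: verify that $E_1 := \{x_{h_1,c_{s+1}},x_{h_2,e_2},\dots,x_{h_a,e_a}\}$ is constructible, extract an edge $E'\subseteq E_1$ via Lemma~\ref{L-Constructible}, and then rule out $E'\subsetneq E_1$ by analysing the facet $F_{i'}$ that realises $E'$, splitting on $i'<\ell_s$ versus $i'>\ell_s$. However, there are three genuine gaps, and the most important one is exactly where you flag an ``obstacle'' and leave it open.

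First, you assert that $E'$ contains $x_{h_1,c_{s+1}}$ without argument. This needs a sentence: if $x_{h_1,c_{s+1}}\notin E'$, then $E'\subseteq\{x_{h_2,e_2},\dots,x_{h_a,e_a}\}\subsetneq E_0$, so $E'$ would be an edge properly contained in the edge $E_0$, contradicting simplicity of $\overline{\mathcal{H}}_{\mathcal{L}_s}$.

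Second, and crucially, your case $i'>\ell_s$ correctly pins $E'$ down to the singleton $\{x_{u_{s+1},c_{s+1}}\}$, but you then describe ruling this out as ``the main obstacle'' and sketch an incomplete ``constructibility budget'' argument. This is not where the difficulty lies. The resolution is immediate from the construction: $c_{s+1}$ is by definition the smallest exponent for which $x_{u_{s+1},c_{s+1}}$ is a \emph{non-isolated} vertex, i.e.\ $x_{u_{s+1},c_{s+1}}\in\mathcal{V}((\overline{\mathcal{H}}_{\mathcal{L}_s})^\circ)$. If $\{x_{u_{s+1},c_{s+1}}\}$ were an edge of $\overline{\mathcal{H}}_{\mathcal{L}_s}$, this vertex would be isolated. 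Hence $|E'|\ge 2$ (this is the content of Remark~\ref{R-BasicRemark2}(b)), and the case $i'>\ell_s$ is closed at once by $|(F_{\ell_s}\cap G_{\ell_s})\setminus A_s|=1$ without any further analysis. Your proposed alternate route is both unnecessary and, as written, not a proof.

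Third, for the case $i'<\ell_s$, you take the contrapositive ``condition~(ii) must fail, so $F_{i'}\cap G_{i'}\subseteq A_s$'' and then say this ``will be shown'' incompatible with the earlier good-vertex selections --- again leaving the essential step unproved. The direct argument is much shorter: since $x_{u_{s+1}}=x_{h_1}\in F_{i'}\setminus A_s$ and $x_{u_{s+1}}\in F_{\ell_s}$, and since $F_{\ell_s}\in\mathcal{F}(\Delta_{i'})$ for $i'<\ell_s$ gives $F_{i'}\cap F_{\ell_s}\subseteq F_{i'}\cap G_{i'}$, we get $x_{u_{s+1}}\in(F_{i'}\cap G_{i'})\setminus A_s$. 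Thus condition~(ii) actually holds for $i'$, and combined with condition~(i) (witnessed by $E'$) this puts $i'\in U_s$, contradicting the minimality of $\ell_s$. Until these three steps are supplied, the proposal does not constitute a proof.
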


\begin{proof}
	Let $E = \{x_{h_1,c_{s+1}},x_{h_2,e_2},\dots,x_{h_a,e_a}\}.$
	Then, by Remark \ref{R-BasicRemark2}(a), $E$ is constructible in $\overline{\mathcal{H}}_{\mathcal{L}_s}.$
	By Lemma \ref{L-Constructible}, there exists an edge 
	$E'$ in $\overline{\mathcal{H}}_{\mathcal{L}_s}$ such that 
	$E' \subset E.$ Further, since $\{x_{h_1,e_1},\dots,x_{h_a,e_a}\}$ is an edge in $\overline{\mathcal{H}}_{\mathcal{L}_s}$, it follows that $x_{h_1,c_{s+1}} \in E'.$ Without loss of generality, we assume that 
	$$E' =\{x_{h_1,c_{s+1}},x_{h_2,e_2},\dots,x_{h_b,e_b}\}.$$ 
    By Remark \ref{R-BasicRemark2}(b), we get $E' \in \mathcal{E}((\overline{\mathcal{H}}_{\mathcal{L}_s})^\circ)$ which gives $b >1.$
	Again by Lemma \ref{L-Constructible}, there exists $i \in [t]$ such that 
	$\{x_{h_1},\dots,x_{h_b}\} = F_{i}\setminus A_s$ and 
	$\mathbf{f} =(c_{s+1},e_2,\dots,e_b)$ is in $[k_i]^b$ with $|\mathbf{f}| \le k_i^{(s)}+b-1.$ If $i<\ell_s$, then $F_i \cap F_{\ell_s} \subset F_i\cap G_i.$ This implies that 
	$(F_i\cap G_i)\setminus A_{s-1}\ne\emptyset.$ Thus, we have $i \in U_s$, a contradiction. 
	On the other hand, if 
	$i >\ell_s$, then 
	$$(F_{\ell_s}\cap F_i)\setminus A_s =\{x_{h_1},\dots,x_{h_b}\}\subset (F_{\ell_s}\cap G_{\ell_s})\setminus A_s.$$ 
	Thus, we get $|(F_{\ell_s}\cap G_{\ell_s})\setminus A_s| > 1$, again a contradiction. Therefore, we obtain $i =\ell_s$, i.e. $E' =E.$
\end{proof}

The following example illustrate the fact that $\{x_{h_1,c_{s+1}},x_{h_2,e_2},\dots,x_{h_a,e_a}\}$ need not be an edge of hypergraph $(\overline{\mathcal{H}}_{\mathcal{L}_s})^\circ$ if $|(F_{\ell_s}\cap G_{\ell_s})\setminus A_s|>1.$\\

\begin{minipage}{\linewidth}
	\hspace{-0.5cm}	
	\begin{minipage}{0.59\linewidth}
		\begin{exmp}\label{E-Edge}
			Consider the simplicial tree $\Delta$ with facets $F_1 = \{x_1,x_2,x_3\}$, 
			$F_2 = \{x_4,x_5,x_6\}$, $F_3 = \{x_1,x_7,x_8\}$, $F_4 = \{x_7,x_8,x_9,x_{10}\}$ and $F_5=\{x_4,x_9,x_{10}\}$ as shown in Figure \ref{Fig 3}. 
			Let $\mathbf{k}_5=(1,1,1,4,2)\in \mathbb{N}^5_{>0}$ and $\mathcal{L} = P_0 P_1 P_2 \dots$ be an infinite string such that $P_1= P_2= P_3= L.$ Suppose $\mathcal{H} =\mathcal{H}(\Delta).$ With notation as in Set-up \ref{Set-up}, the sequence $(x_{u_1,c_1},x_{u_2,c_2},x_{u_3,c_3}) = (x_{1,1},x_{4,1},x_{7,1})$ is such that    
		\end{exmp}
	\end{minipage}
    \hspace{0.5cm}
    \begin{minipage}{0.26\linewidth}
    	\begin{figure}[H]
    		\begin{tikzpicture}[scale=0.6][line cap=round,line join=round,>=triangle 45,x=1.0cm,y=1.0cm]
    			\fill[lightgray] (0,0) -- (2,2) -- (2,0) -- cycle;
    			\fill[lightgray] (2,0) -- (3.86,2.12) -- (4,0) -- cycle;
    			\fill[lightgray] (4,0) -- (7,0) -- (3.86,2.12) -- (5.06,0.6) -- cycle;
    			\fill[lightgray] (3.86,2.12) -- (4,0) -- (5.06,0.6) -- cycle;
    			\fill[lightgray] (5.06,0.6) -- (7,0) -- (4,0) -- cycle;
    			\fill[lightgray] (7,0) -- (7,2) -- (5.06,0.6) -- cycle;
    			\fill[lightgray] (8,0) -- (7,2) -- (9,2) -- cycle;
    			\draw (0,0)-- (2,2);
    			\draw (2,2)-- (2,0);
    			\draw (2,0)-- (0,0);
    			\draw (2,0)-- (3.86,2.12);
    			\draw (3.86,2.12)-- (4,0);
    			\draw (4,0)-- (2,0);
    			\draw (4,0)-- (7,0);
    			\draw (7,0)-- (3.86,2.12);
    			\draw (3.86,2.12)-- (5.06,0.6);
    			\draw (5.06,0.6)-- (4,0);
    			\draw (3.86,2.12)-- (4,0);
    			\draw (4,0)-- (5.06,0.6);
    			\draw (5.06,0.6)-- (3.86,2.12);
    			\draw (5.06,0.6)-- (7,0);
    			\draw (7,0)-- (4,0);
    			\draw (4,0)-- (5.06,0.6);
    			\draw (7,0)-- (7,2);
    			\draw (7,2)-- (5.06,0.6);
    			\draw (5.06,0.6)-- (7,0);
    			\draw (8,0)-- (7,2);
    			\draw (7,2)-- (9,2);
    			\draw (9,2)-- (8,0);
    			
    			\begin{scriptsize}
    				\fill  (0,0) circle (1.5pt);
    				\draw[below] (0,0) node {$x_3$};
    				\fill  (2,0) circle (1.5pt);
    				\draw[below] (2,0) node {$x_1$};
    				\fill  (2,2) circle (1.5pt);
    				\draw[above] (2,2) node {$x_2$};
    				\fill  (3.86,2.12) circle (1.5pt);
    				\draw[above] (3.86,2.12) node {$x_7$};
    				\fill  (4,0) circle (1.5pt);
    				\draw[below] (4,0) node {$x_8$};
    				\fill  (7,0) circle (1.5pt);
    				\draw[below] (7,0) node {$x_{10}$};
    				\fill  (5.06,0.6) circle (1.5pt);
    				\draw[left] (5.06,0.6) node {$x_9$};
    				\fill  (7,2) circle (1.5pt);
    				\draw[above] (7,2) node {$x_4$};
    				\fill  (9,2) circle (1.5pt);
    				\draw[above] (9,2) node {$x_5$};
    				\fill (8,0) circle (1.5pt);
    				\draw[below] (8,0) node {$x_6$};
    			\end{scriptsize}
    		\end{tikzpicture}
    		\caption{}
    		\label{Fig 3}
    	\end{figure}
    \end{minipage}
\end{minipage}
$$
\begin{array}{ccl}
	\overline{\mathcal{H}}_{\mathcal{L}_q}& = &
	\left\{\begin{array}{lll}
		\overline{\mathcal{H}}_{\mathcal{L}_{q-1}}/{x_{u_q,c_q} }&~\mbox{if}~P_q = L;\\
		\overline{\mathcal{H}}_{\mathcal{L}_{q-1}} \setminus {x_{u_q,c_q} } &~\mbox{otherwise}
	\end{array}\right.
\end{array}
$$
for all $q \in [3].$
The edge set of the hypergraph $(\overline{\mathcal{H}}_{\mathcal{L}_3})^\circ$ is given by\\
    
    $\mathcal{E}((\overline{\mathcal{H}}_{\mathcal{L}_3})^\circ)=
    \left\{\begin{array}{ll}
    	&\{x_{2,1},x_{3,1}\},\{x_{5,1},x_{6,1}\},\{x_{8,2},x_{9,3},x_{10,1}\},\{x_{8,2},x_{9,1},x_{10,3}\}, \nonumber \\ 
    	&
    	\{x_{8,2},x_{9,2},x_{10,2}\},\{x_{9,1},x_{10,1}\},\{x_{9,2},x_{10,1}\},\{x_{9,1},x_{10,2}\}
    \end{array}\right\}.$\\ 
    Now, $U_3 =\{4\}$, and hence $\ell_3 = 4.$ Since 
    $G_4 = F_5$, we get 
    $$(F_4\cap G_4)\setminus \{x_1,x_4,x_7\} =\{x_9,x_{10}\}.$$
    Thus, we can take $x_{u_4,c_4}=x_{9,1}.$ Note that $\{x_{8,2},x_{9,3},x_{10,1}\}$ is an edge  $(\overline{\mathcal{H}}_{\mathcal{L}_3})^\circ$ but $\{x_{8,2},x_{9,1},x_{10,1}\}$ is not an edge in $(\overline{\mathcal{H}}_{\mathcal{L}_3})^\circ.$

\begin{lem}\label{L-Edge2}\rm
	With notation as in Set-up \ref{Set-up} and Remark \ref{R-BasicRemark1}, let $s \in[\alpha(\mathcal{L})-1].$ Suppose that 
	$|(F_{\ell_s}\cap G_{\ell_s})\setminus A_s|>1.$ Then
	$\{x_{h_1,c_s},x_{h_2,f_2},\dots,x_{h_a,f_a}\} \in \mathcal{E}((\overline{\mathcal{H}}_{\mathcal{L}_s})^\circ)$ for some 
	$(f_2,\dots,f_a) \in [k_{\ell_s}]^{a-1}.$
\end{lem}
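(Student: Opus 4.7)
My plan is to parallel the proof of Lemma~\ref{L-Edge1}, but allow the second indices $(f_2,\dots,f_a)$ to be chosen adaptively, since Example~\ref{E-Edge} shows that the naive choice $(f_2,\dots,f_a)=(e_2,\dots,e_a)$ can produce a constructible set that properly contains an $F_j$-edge for some $j>\ell_s$.

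By Remark~\ref{R-BasicRemark1}(b), fix an edge $E_0=\{x_{h_1,e_1},\dots,x_{h_a,e_a}\}\in\mathcal{E}((\overline{\mathcal{H}}_{\mathcal{L}_s})^\circ)$ with $|\mathbf{e}|\le k_{\ell_s}^{(s)}+a-1$. Using $c_{s+1}\le e_1$ (by the minimality defining $c_{s+1}$) together with Remark~\ref{R-BasicRemark2}(a), the set obtained by replacing $e_1$ with $c_{s+1}$ is constructible in $\overline{\mathcal{H}}_{\mathcal{L}_s}$. Hence the family
\[
\mathcal{C}=\{\mathbf{f}\in[k_{\ell_s}]^{a-1}:\ E_{\mathbf{f}}:=\{x_{h_1,c_{s+1}},x_{h_2,f_2},\dots,x_{h_a,f_a}\}\text{ is constructible in }\overline{\mathcal{H}}_{\mathcal{L}_s}\}
\]
is finite and nonempty. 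I would pick $\mathbf{f}\in\mathcal{C}$ maximizing the weighted sum $\sum_{p\in I\setminus\{1\}}f_p$, where $I=\{p:x_{h_p}\in(F_{\ell_s}\cap G_{\ell_s})\setminus A_s\}$, and claim that $E_{\mathbf{f}}$ itself is an edge.

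By Lemma~\ref{L-Constructible}, $E_{\mathbf{f}}$ is an edge if and only if no proper subset is constructible; equivalently, for every facet $F_i$ with $F_i\setminus A_s=\{x_{h_{j_1}},\dots,x_{h_{j_b}}\}\subsetneq\{x_{h_1},\dots,x_{h_a}\}$, the sum $\sum_{k=1}^b\widetilde f_{j_k}$ (with $\widetilde f_1:=c_{s+1}$ and $\widetilde f_p:=f_p$ for $p\ge 2$) must exceed $k_i^{(s)}+b-1$. The case $i<\ell_s$ is ruled out as in Lemma~\ref{L-Edge1}: the inclusion $F_i\cap F_{\ell_s}\subseteq F_i\cap G_i$ places any vertex of a would-be edge $E'\subseteq E_{\mathbf{f}}$ into $(F_i\cap G_i)\setminus A_s$, giving $i\in U_s$ and contradicting the minimality of $\ell_s$. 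The case $i=\ell_s$ forces $b=a$, so the would-be proper subset is $E_{\mathbf{f}}$ itself, which is absurd.

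The main obstacle is therefore the case $i>\ell_s$, where $F_i\setminus A_s\subseteq(F_{\ell_s}\cap G_{\ell_s})\setminus A_s$ and every index $j_k$ lies in $I$. The plan is to leverage the maximality of $\sum_{p\in I\setminus\{1\}}f_p$: were the partial-sum inequality violated for some such $F_i$, the gap between the $F_{\ell_s}$-bound $k_{\ell_s}^{(s)}+a-1$ on $E_{\mathbf{f}}$ and the $F_i$-bound $k_i^{(s)}+b-1$ on $E'$ would furnish slack to bump some $f_p$ with $p\in I\cap(F_i\setminus A_s)$ up by one, possibly offset by decreasing some $f_q$ with $q\notin I$ (whose coordinate cannot appear in any such $E'$), producing $\mathbf{f}'\in\mathcal{C}$ with strictly larger weighted sum and contradicting maximality. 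I expect the main technical point to be verifying that such a bump-and-offset is always executable within the constructibility constraints for $F_{\ell_s}$; this should follow from the good-leaf-order structure of $\Delta_{\ell_s}$, which ensures that $F_i\cap F_{\ell_s}\subseteq F_{\ell_s}\cap G_{\ell_s}$ for every $i>\ell_s$ and hence keeps the $f_q$ with $q\notin I$ free of constraints coming from $E'$.
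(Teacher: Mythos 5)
Your plan diverges substantially from the paper's proof, and the divergence introduces a genuine gap that the paper's own Example~\ref{E-Edge} exposes.

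The paper does not maximize anything. Starting from the edge $E_0=\{x_{h_1,e_1},\dots,x_{h_a,e_a}\}$, it sets $f_1=c_{s+1}$, moves \emph{all} of the excess $e_1-c_{s+1}$ onto a single coordinate $q$ (the one corresponding to $x_{w_{j'}}$, the second-smallest non-$A_s$ good vertex lying in $F_{\ell_s}\cap G_{\ell_s}$), and keeps $f_p=e_p$ for every other $p$. This rigid choice is what makes the verification go through: for any candidate proper constructible subset $E'$ from $F_i$ with $i>\ell_s$ and $|F_i\setminus A_s|\ge 2$, the nested structure of the good-vertex sequence forces both $x_{h_1}$ and $x_{h_q}$ into $F_i\setminus A_s$, so $f_1+f_q=e_1+e_q$ and hence $|\mathbf{f}'|=|\mathbf{e}'|$, contradicting that $E_0$ was already an edge. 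The singleton case $b=1$ is then handled separately precisely because $f_p=e_p$ for $p\neq 1,q$, and because $e_q\le f_q$: any isolated singleton inside $E_{\mathbf f}$ would force a corresponding isolated singleton inside $E_0$, a contradiction.

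Your maximizer loses exactly this control, and it fails concretely. In Example~\ref{E-Edge} one has $\ell_3=4$, $F_4\setminus A_3=\{x_9,x_8,x_{10}\}$ with $x_{h_1}=x_9$, $I=\{1,\text{index of }x_{10}\}$, $k_4^{(3)}=4$, and $c_{s+1}=1$. The constructibility constraint reads $1+f_{8}+f_{10}\le 6$, so the maximizer of $f_{10}$ in your $\mathcal C$ is $(f_8,f_{10})=(1,4)$, giving $E_{\mathbf f}=\{x_{9,1},x_{8,1},x_{10,4}\}$. But $\{x_{8,1}\}$ is an edge coming from $F_3$ (which has $3<\ell_s$ and fails condition (i) of $U_3$ precisely because $x_{8,1}$ is isolated), so $x_{8,1}\notin\mathcal V((\overline{\mathcal H}_{\mathcal L_3})^\circ)$ and $E_{\mathbf f}$ is not an edge of $(\overline{\mathcal H}_{\mathcal L_3})^\circ$. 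The paper's specific $\mathbf f=(1,2,3)$ instead produces $\{x_{9,1},x_{8,2},x_{10,3}\}$, which is an edge. The flaw in your reduction is that you dismiss the case $i<\ell_s$ by claiming it forces $i\in U_s$; that inference needs $E'$ to lie in $(\overline{\mathcal H}_{\mathcal L_s})^\circ$, but singleton edges coming from such $F_i$ can be isolated vertices, which is exactly the obstruction. In addition, even for $i>\ell_s$, your bump-and-offset is not verified: the offset requires some $q\notin I$ with $f_q$ decreasable (this can fail if $I=[a]$ or if all such $f_q$ are at their lower bounds from $B_s$), and the bump itself must respect the $[k_{\ell_s}]$ cap. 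These are not cosmetic — they are the technical content of the lemma, and the paper's hand-crafted $\mathbf f$ is tailored to avoid all of them.
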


\begin{proof}
    We have $(F_{\ell_s}\cap G_{\ell_s})\setminus A_s \subset \{x_{w_{j_0}},\dots,x_{w_r}\}.$ Thus, there exists some $j>j_0$ such that $x_{w_j}\in (F_{\ell_s}\cap G_{\ell_s})\setminus A_s.$ Choose 
    $j' = \min\{j:x_{w_j} \in (F_{\ell_s}\cap G_{\ell_s})\setminus A_s, j \neq j_0 \}.$ 
    Then $x_{w_{j'}} = x_{h_q}$ 
    for some $2 \le q\le a.$ Take $f_1 = c_{s+1}$, $f_q = e_1+e_q-c_{s+1}$ and $f_p = e_p$ for $p \ne 1, q.$ We have 
    $$|\mathbf{f}|= |\mathbf{e}|\le k_{\ell_s}^{(s)}+a-1, \text{ where } 
    \mathbf{f} =(f_1,\dots,f_a).$$ 
    Let $E = \{x_{h_1,f_1},\dots,x_{h_a,f_a}\}.$ Using Remark \ref{R-BasicRemark2}(a) and the fact that $f_q \ge e_q$, we get $E$ is constructible in $\overline{\mathcal{H}}_{\mathcal{L}_s}.$ We claim that $E \in \mathcal{E}((\overline{\mathcal{H}}_{\mathcal{L}_s})^\circ).$ If possible, let $E \notin \mathcal{E}((\overline{\mathcal{H}}_{\mathcal{L}_s})^\circ).$ Then by Lemma \ref{L-Constructible}, there exists an edge 
    $E' =\{x_{h_{p_1},f_{p_1}},\dots,x_{h_{p_b},f_{p_b}}\}$ in $\overline{\mathcal{H}}_{\mathcal{L}_s}$ such that $E'\subsetneq E$, where 
    $\{x_{h_{p_1}},\dots,x_{h_{p_b}}\} = F_i \setminus A_s$ and 
    $\mathbf{f}' =(f_{p_1},\dots,f_{p_b}) \in [k_i]^b$ with 
    $|\mathbf{f}'| \le k_i^{(s)}+b-1$ 
    for some $i \in [t].$
	We claim that $b>1.$ If possible, suppose $b=1.$ Then 
	$E' =\{x_{h_{p_1},f_{p_1}}\}$ and $f_{p_1}\le k_i^{(s)}.$ By Remark \ref{R-BasicRemark2}(b), 
	$x_{h_1,f_1}= x_{u_{s+1},c_{s+1}} \notin E'.$ 
	Since $\{x_{h_1,e_1},\dots,x_{h_a,e_a}\}\in \mathcal{E}((\overline{\mathcal{H}}_{\mathcal{L}_s})^\circ)$, it follows that 
	$E' =\{x_{h_q,f_q}\}.$
    Now, by using the inequalities $e_q \le f_q = f_{p_1} \le k_i^{(s)}$,we get $\{x_{h_q,e_q}\}$ is constructible in $\overline{\mathcal{H}}_{\mathcal{L}_s}.$ Thus, by using Lemma 
    \ref{L-Constructible}, we get 
    $\{x_{h_q,e_q}\}\in \mathcal{E}(\overline{\mathcal{H}}_{\mathcal{L}_s}).$ This contradicts the fact that $\{x_{h_1,e_1},\dots,x_{h_a,e_a}\}\in \mathcal{E}((\overline{\mathcal{H}}_{\mathcal{L}_s})^\circ).$ Thus, we must have $b>1.$ Next, we prove that $i>\ell_s.$ If $i<\ell_s$, then 
    $(F_i \cap F_{\ell_s})\setminus A_s \subset 
    (F_i\cap G_i)\setminus A_s$ which implies that $i\in U_s$, a contradiction. This proves that $i>\ell_s.$ Note that
    $$(F_{\ell_s}\cap F_i)\setminus A_s=\{x_{h_{p_1}},\dots,x_{h_{p_b}}\}\subset (F_{\ell_s}\cap G_{\ell_s})\setminus A_s.$$ 
    Since $b>1$, it follows from the definition of sequence of good vertices that $x_{w_{j_0}},x_{w_{j'}}\in F_i\setminus A_s$, i.e. $x_{h_1},x_{h_q}\in F_i\setminus A_s.$ Therefore, we have 
    $$|\mathbf{e}'| = |\mathbf{f}'| \le k_i^{(s)}+b-1,$$
    where $\mathbf{e}' =(e_{p_1},\dots,e_{p_b}).$
    Thus, $\{x_{h_{p_1},e_{p_1}},\dots,x_{h_{p_b},e_{p_b}}\}$ is constructible $\overline{\mathcal{H}}_{\mathcal{L}_s}.$ Now, by Lemma \ref{L-Constructible}, there exists an edge $E'' \in \mathcal{E}(\overline{\mathcal{H}}_{\mathcal{L}_s})$ such that $E''\subset \{x_{h_{p_1},e_{p_1}},\dots,x_{h_{p_b},e_{p_b}}\}.$ This implies that $\{x_{h_1,e_1},\dots,x_{h_a,e_a}\}\notin \mathcal{E}((\overline{\mathcal{H}}_{\mathcal{L}_s})^\circ)$, a contradiction.
\end{proof}

\begin{lem}\label{L-SheddingVertex}
	With notation  as in Set-up \ref{Set-up} and Remark \ref{R-BasicRemark1}, let $s \in [\alpha(\mathcal{L})-1]\cup\{0\}.$
    Then $x_{u_{s+1},c_{s+1}}$ is a shedding vertex of hypergraph  $(\overline{\mathcal{H}}_{\mathcal{L}_s})^\circ.$ 
\end{lem}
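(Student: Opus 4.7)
The plan is to mimic the shedding-vertex argument of Lemma~\ref{L-Simplex}, substituting the single distinguished edge used there with the edges produced by Lemmas~\ref{L-Edge1} and~\ref{L-Edge2}, and using Lemma~\ref{L-Constructible} to control the shape of any edge that gets in the way.

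Fix $W$ with $W\cup\{x_{u_{s+1},c_{s+1}}\}$ independent in $(\overline{\mathcal{H}}_{\mathcal{L}_s})^\circ$. According to whether $|(F_{\ell_s}\cap G_{\ell_s})\setminus A_s|$ equals $1$ or exceeds $1$, Lemma~\ref{L-Edge1} or Lemma~\ref{L-Edge2} supplies an edge
$$E=\{x_{h_1,c_{s+1}},x_{h_2,\eta_2},\dots,x_{h_a,\eta_a}\}\in\mathcal{E}((\overline{\mathcal{H}}_{\mathcal{L}_s})^\circ),$$
with $x_{h_1}=x_{u_{s+1}}$ and $a\ge 2$ (a singleton edge would force $x_{u_{s+1},c_{s+1}}$ to be isolated). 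Since $E\not\subset W\cup\{x_{u_{s+1},c_{s+1}}\}$, there exists $q\ge 2$ with $x_{h_q,\eta_q}\notin W$. Let $f$ be the largest positive integer such that $x_{h_q,f}\in\mathcal{V}((\overline{\mathcal{H}}_{\mathcal{L}_s})^\circ)$ and $x_{h_q,f}\notin W$, and set $v:=x_{h_q,f}$. Because $h_q\ne h_1=u_{s+1}$, $v$ differs from $x_{u_{s+1},c_{s+1}}$, and it suffices to show that $W\cup\{v\}$ is independent in $(\overline{\mathcal{H}}_{\mathcal{L}_s})^\circ$.

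Assume, for a contradiction, that some edge $E'\in\mathcal{E}((\overline{\mathcal{H}}_{\mathcal{L}_s})^\circ)$ lies in $W\cup\{v\}$; independence of $W$ forces $v\in E'$, and Lemma~\ref{L-Constructible} yields $i\in[t]$, indices $\{x_{j_1},\dots,x_{j_b}\}=F_i\setminus A_s$ and a tuple $\mathbf{\phi}$ obeying the constructibility bounds. If $i<\ell_s$, the branch property of $G_i$ in $\Delta_i$ gives $x_{h_q}\in(F_i\cap F_{\ell_s})\setminus A_s\subset(F_i\cap G_i)\setminus A_s$, while $E'$ itself witnesses condition (i) for $U_s$; hence $i\in U_s$, contradicting the minimality of $\ell_s$. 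If $i=\ell_s$ then trivially $x_{u_{s+1}}\in F_i\setminus A_s$, and if $i>\ell_s$ the good-vertex chain of Definition~\ref{D-GoodVertices} forces $F_{\ell_s}\cap F_i$ to be an initial segment $\{x_{w_1},\dots,x_{w_\nu}\}$ of the good-vertex sequence of $F_{\ell_s}$; combined with $x_{h_q}=x_{w_\kappa}\notin A_s$ and $\kappa\ge j_0$, this gives $j_0\le\kappa\le\nu$ and so $x_{u_{s+1}}=x_{w_{j_0}}\in F_i\setminus A_s$. Let $\mu,\mu'$ denote the positions of $x_{u_{s+1}}$ and $x_{h_q}$ in $\mathbf{\phi}$; since $x_{u_{s+1},\phi_\mu}\in W$ while $x_{u_{s+1},c_{s+1}}\notin W$ and $\phi_\mu\ge c_{s+1}$ by the minimality of $c_{s+1}$, we have $\phi_\mu>c_{s+1}$. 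Define $\mathbf{\phi}'$ by $\phi'_\mu=c_{s+1}$, $\phi'_{\mu'}=f+1$, and $\phi'_p=\phi_p$ otherwise, and set $E''=\{x_{j_1,\phi'_1},\dots,x_{j_b,\phi'_b}\}$.

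The decisive step is to show $E''$ is constructible in $\overline{\mathcal{H}}_{\mathcal{L}_s}$ and to extract the contradiction. Constructibility needs $|\mathbf{\phi}'|\le k_i^{(s)}+b-1$ (the net change is $1-(\phi_\mu-c_{s+1})\le 0$), $\phi'_\mu>c_m$ when $x_{u_{s+1}}\in B_s\setminus A_s$ (Remark~\ref{R-BasicRemark2}(a)), $\phi'_{\mu'}>c_m$ when $x_{h_q}\in B_s\setminus A_s$ (since $x_{h_q,f}\in\mathcal{V}((\overline{\mathcal{H}}_{\mathcal{L}_s})^\circ)$ gives $f>c_m$), and $\phi'_{\mu'}=f+1\le k_i$ (via an estimate on $|\mathbf{\phi}|$ in the spirit of Equation~\eqref{EQ1}). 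Lemma~\ref{L-Constructible} then produces an edge $E'''\subset E''$ of $\overline{\mathcal{H}}_{\mathcal{L}_s}$. A size-one $E'''$ is ruled out: if $E'''=\{y\}$ with $y\in E''\setminus\{x_{h_q,f+1}\}$, then $y$ is isolated, contradicting $y\in\mathcal{V}((\overline{\mathcal{H}}_{\mathcal{L}_s})^\circ)$; if $E'''=\{x_{h_q,f+1}\}$, then shrinking the tuple shows $\{x_{h_q,f}\}$ is also constructible, hence an edge, making $x_{h_q,f}$ isolated, again impossible. Thus $|E'''|\ge 2$, and if $x_{h_q,f+1}\in E'''$ the clutter property forbids $\{x_{h_q,f+1}\}$ from being an edge, so $x_{h_q,f+1}\in\mathcal{V}((\overline{\mathcal{H}}_{\mathcal{L}_s})^\circ)$, and the maximality of $f$ forces $x_{h_q,f+1}\in W$. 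In every subcase $E'''\subset W\cup\{x_{u_{s+1},c_{s+1}}\}$, contradicting independence. The main technical obstacle is precisely the arithmetic bookkeeping that certifies all of the constructibility inequalities simultaneously, and in particular the uniform control of $f+1\le k_i$ across the cases $i=\ell_s$ and $i>\ell_s$ together with the $c_m$-lower-bounds imposed by vertices belonging to $B_s\setminus A_s$.
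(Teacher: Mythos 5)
Your proposal is correct and follows essentially the same route as the paper's proof: obtain the distinguished edge through $x_{u_{s+1},c_{s+1}}$ from Lemmas~\ref{L-Edge1}/\ref{L-Edge2}, pick a coordinate $h_q$ ($q\ge 2$) and the maximal $f$ with $x_{h_q,f}\in\mathcal{V}((\overline{\mathcal{H}}_{\mathcal{L}_s})^\circ)\setminus W$, and rule out an offending edge $E'\subset W\cup\{x_{h_q,f\}}$ by building, via Lemma~\ref{L-Constructible} and Remark~\ref{R-BasicRemark2}, the modified constructible set that swaps the $u_{s+1}$-coordinate down to $c_{s+1}$ and the $h_q$-coordinate up to $f+1$, yielding an edge inside $W\cup\{x_{u_{s+1},c_{s+1}}\}$. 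The only differences are expository: you treat the $i<\ell_s$, $i=\ell_s$, $i>\ell_s$ cases and the size-one $E'''$ subcases a bit more explicitly, whereas the paper routes the latter through Remark~\ref{R-BasicRemark2}(b), but the content and the arithmetic bookkeeping ($\phi_\mu>c_{s+1}$, $f+1\le k_i$, $|\mathbf{\phi}'|\le k_i^{(s)}+b-1$) coincide.
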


\begin{proof}
    Let $\mathcal{D}=\Delta((\overline{\mathcal{H}}_{\mathcal{L}_s})^\circ)$ and $W\in\link_{\mathcal{D}}(x_{u_{s+1},c_{s+1}}).$ In view of Lemmas \ref{L-Edge1} and \ref{L-Edge2}, we may assume that $e_1 = c_{s+1}.$
	For $p\in [a]$, let $\sigma_p=\max\{c:x_{h_p,c}\in \mathcal{V}((\overline{\mathcal{H}}_{\mathcal{L}_s})^\circ)\}.$ Suppose that $$\{x_{h_p,1},\dots,x_{h_p,\sigma_p}\}\cap \mathcal{V}((\overline{\mathcal{H}}_{\mathcal{L}_s})^\circ)\subset W$$ 
	for all $2\le p\le a.$ Then $\{x_{h_1,e_1},x_{h_2,e_2},\dots,x_{h_a,e_a}\}\subset W\cup\{x_{u_{s+1},c_{s+1}}\}$, a contradiction. Thus, there exists $2\le p\le a$ such that  $\{x_{h_p,1},\dots,x_{h_p,\sigma_p}\}\cap \mathcal{V}((\overline{\mathcal{H}}_{\mathcal{L}_s})^\circ)\not{\subset} W.$ Suppose $f$ is the largest integer such that $x_{h_p,f}\in \mathcal{V}((\overline{\mathcal{H}}_{\mathcal{L}_s})^\circ)\setminus W.$ We claim that $W \cup\{x_{h_p,f}\} \in \del_{\mathcal{D}}(x_{u_{s+1},c_{s+1}}).$ On the contrary, suppose that 
	$W \cup\{x_{h_p,f}\} \notin \del_{\mathcal{D}}(x_{u_{s+1},c_{s+1}}).$ Since $x_{u_{s+1},c_{s+1}}\notin W$, it follows that there exists an edge $E = \{x_{j_1,f_1},\dots,x_{j_b,f_b}\}$ in $(\overline{\mathcal{H}}_{\mathcal{L}_s})^\circ$ such that $E \subset W\cup \{x_{h_p,f}\}.$ In view of Lemma \ref{L-Constructible}, there exists $i\in [t]$ such that $\{x_{j_1},\dots,x_{j_b}\} = F_i\setminus A_s$ and $\mathbf{f} =(f_1,\dots,f_b) \in [k_i]^b$ with 
	$$f_q > c_m \text{ if } x_{j_q} = x_{u_m} \in B_s\setminus A_s \text{ and } |\mathbf{f}| \le k_i^{(s)}+b-1.$$
    Since $W\in \mathcal{D}$, we have $x_{h_p,f} \in E.$ Let $x_{h_p,f} =x_{j_1,f_1}.$ Then $x_{h_p} =x_{j_1}$ and $f =f_1.$ If $i<\ell_s$, then the fact $x_{h_p}\in (F_i\cap F_{\ell_s})\setminus A_s$, implies that $i\in U_s.$ This contradicts the minimality of $\ell_s.$ Thus, we get 
    $i\ge \ell_s$, and hence 
    $x_{h_p} \in (F_{\ell_s}\cap G_{\ell_s}) \setminus A_s.$ 
    Recall that 
    $$j_0 = \min\{j:x_{w_j} \in (F_{\ell_s}\cap G_{\ell_s}) \setminus A_s\}.$$ 
    Thus, $x_{h_p} = x_{w_j}$ for some $j > j_0.$ Further, it follows from the definition of sequence of good vertices that $x_{u_{s+1}}= x_{w_{j_0}}\in F_i.$ Let $x_{u_{s+1}} = x_{j_2}.$ Then $f_2>c_{s+1} \ge 1.$ Now, the inequalities $|\mathbf{f}| \le k_i^{(s)}+b-1 \le k_i+b-1$ implies that $f_1 <k_i.$ Thus, the set 
    $$F = \{x_{j_1,f_1+1} = x_{h_p,f+1},x_{j_2,c_{s+1}} = x_{u_{s+1},c_{s+1}},\dots,x_{j_b,f_b}\}\subset \mathcal{V}(\mathcal{H}(\bk_t)).$$ Note that 
    $$(f_1+1)+c_{s+1}+ \cdots +f_b \le|\mathbf{f}| \le k_i^{(s)}+b-1.$$ 
    By Remark \ref{R-BasicRemark2}(a), $F$ is constructible in $\overline{\mathcal{H}}_{\mathcal{L}_s}$, and hence by
    Lemma \ref{L-Constructible}, there exists an edge $F' \in \mathcal{E}(\overline{\mathcal{H}}_{\mathcal{L}_s})$ such that $F'\subset F.$  Since $E$ is an edge in $(\overline{\mathcal{H}}_{\mathcal{L}_s})^\circ$, we have
    $$
    	\{x_{j_1,f_1+1},x_{j_2,c_{s+1}}\}\cap F' \ne \emptyset.
    $$
    We prove that $F'$ is an edge in $(\overline{\mathcal{H}}_{\mathcal{L}_s})^\circ.$
    If $F'\notin \mathcal{E}((\overline{\mathcal{H}}_{\mathcal{L}_s})^\circ)$, then by Remark \ref{R-BasicRemark2}(b), it follows that $x_{j_2,c_{s+1}}\notin F'.$ Therefore, we have 
    $F' =\{x_{j_1,f_1+1}\}.$ Now, Lemma \ref{L-Constructible} implies that 
    $\{x_{j_1,f_1}\} \in \mathcal{E}(\overline{\mathcal{H}}_{\mathcal{L}_s}).$ 
   Hence $E\notin\mathcal{E}((\overline{\mathcal{H}}_{\mathcal{L}_s})^\circ)$,
   a contradiction. This proves that $F'$ is an edge in $(\overline{\mathcal{H}}_{\mathcal{L}_s})^\circ.$ Now, we prove that 
   $F'\subset W \cup\{x_{u_{s+1},c_{s+1}}\}.$ 
   If $x_{j_1,f_1+1}\notin F'$, then we are done.
   On the other hand, if $x_{j_1,f_1+1}\in F'$, then by maximality of $f_1 =f$, we get $x_{j_1,f_1+1}\in W$, and hence $F'\subset W \cup\{x_{u_{s+1},c_{s+1}}\}.$ Thus, in both cases, we have $F'\subset W \cup\{x_{u_{s+1},c_{s+1}}\}$, and hence 
   $W \cup\{x_{u_{s+1},c_{s+1}}\} \notin \mathcal{D}$, a contradiction. This proves our claim that $W \cup\{x_{h_p,f}\} \in \del_{\mathcal{D}}(x_{u_{s+1},c_{s+1}}).$ Thus, $x_{u_{s+1},c_{s+1}}$ is a shedding vertex of $(\overline{\mathcal{H}}_{\mathcal{L}_s})^\circ.$
\end{proof}

\begin{lem}\label{L-Alpha>}
	With notation as in Set-up \ref{Set-up} and Remark \ref{R-BasicRemark1},
	$\overline{\mathcal{H}}_{\mathcal{L}_s}$ is a vertex decomposable hypergraph for all $s\ge \alpha(\mathcal{L}).$
\end{lem}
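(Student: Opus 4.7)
The plan is to reduce the statement to showing that the single hypergraph $\mathcal{H}_0 := \overline{\mathcal{H}}_{\mathcal{L}_{\alpha(\mathcal{L})}}$ is vertex decomposable, which is legitimate since Remark~\ref{R-BasicRemark1}(a) yields $\overline{\mathcal{H}}_{\mathcal{L}_s} = \mathcal{H}_0$ for every $s \ge \alpha(\mathcal{L})$. Set $A = A_{\alpha(\mathcal{L})}$ and $B = B_{\alpha(\mathcal{L})}$. Because $U_{\alpha(\mathcal{L})} = \emptyset$, each $i \in [t-1]$ violates at least one of conditions~(i) and~(ii) of Set-up~\ref{Set-up}, and I intend to exploit this to decompose $(\mathcal{H}_0)^\circ$ into simple vertex-disjoint pieces.

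Concretely, let $I = \{i \in [t] : F_i \text{ contributes at least one edge to } (\mathcal{H}_0)^\circ\}$. For every $i \in I \cap [t-1]$ condition~(i) holds by the definition of $I$, so condition~(ii) must fail, giving $F_i \cap G_i \subset A$. Since $G_i$ is a branch of $F_i$ in $\Delta_i = \langle F_i,\dots,F_t\rangle$, for all $j > i$ we have $F_i \cap F_j \subset F_i \cap G_i \subset A$, and therefore $(F_i\setminus A)\cap(F_j\setminus A) = \emptyset$ for distinct $i, j \in I$. Combining this with Lemma~\ref{L-Constructible}, which asserts that every edge of $\mathcal{H}_0$ is constructible from some $F_i$, it follows that every edge of $(\mathcal{H}_0)^\circ$ belongs to a unique subhypergraph $\mathcal{K}_i$ of edges constructible from $F_i$ for some $i \in I$, and the vertex sets of the distinct $\mathcal{K}_i$ are disjoint. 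Thus $(\mathcal{H}_0)^\circ = \bigsqcup_{i \in I} \mathcal{K}_i$ as a vertex-disjoint union.

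The main obstacle is verifying that each $\mathcal{K}_i$ is itself vertex decomposable. Its edges have the form $\{x_{j_1,f_1},\dots,x_{j_b,f_b}\}$ with $\{x_{j_1},\dots,x_{j_b}\} = F_i \setminus A$, $|\mathbf{f}| \le k_i^{(\alpha(\mathcal{L}))} + b - 1$, and $f_p > c_m$ whenever $x_{j_p} = x_{u_m} \in B \setminus A$. I would set $D_p := \max\{c_m : x_{u_m} = x_{j_p},\, P_m = D\}$ when $x_{j_p} \in B\setminus A$ and $D_p := 0$ otherwise; the substitution $f'_p = f_p - D_p$ then yields $f'_p \ge 1$ and $|\mathbf{f}'| \le \tilde{k} + b - 1$ with $\tilde{k} = k_i^{(\alpha(\mathcal{L}))} - \sum_{p} D_p$. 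This identifies $\mathcal{K}_i$ (after discarding the levels removed by deletion) with a hypergraph of the form $(F_i \setminus A)(\tilde{k})$ to which Lemma~\ref{L-Simplex} applies, so $\mathcal{K}_i$ is vertex decomposable. A straightforward induction (shedding a vertex from any one component) then upgrades this to vertex decomposability of the vertex-disjoint union $(\mathcal{H}_0)^\circ$, and since the difference between $\mathcal{H}_0$ and $(\mathcal{H}_0)^\circ$ consists only of isolated vertices and trivial edges forming an isolated subhypergraph, $\mathcal{H}_0$ itself is vertex decomposable.
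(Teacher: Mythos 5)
Your proposal is correct and takes essentially the same approach as the paper: reduce to $\overline{\mathcal{H}}_{\mathcal{L}_{\alpha(\mathcal{L})}}$, use the failure of condition~(ii) together with the branch property to show the edge-contributing facets yield vertex-disjoint pieces, shift indices to identify each piece with a hypergraph $(F_i\setminus A)(\tilde k)$, and invoke Lemma~\ref{L-Simplex}. The paper defines the shift via $c'_p = \min\{c : x_{j_p,c}\in\mathcal{V}((\overline{\mathcal{H}}_{\mathcal{L}_\alpha})^\circ)\}-1$ rather than via the deletion history $D_p$; these coincide (since for $c\le D_p$ not deleted, $x_{j_p,c}$ is isolated by the constraint $f_p>c_m$), a small verification you leave implicit.
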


\begin{proof}
    For simplicity, we write $\alpha =\alpha(\mathcal{L}).$ Then $U_{\alpha} =\emptyset.$ It is suffices to prove that $\overline{\mathcal{H}}_{\mathcal{L}_{\alpha}}$ is a vertex decomposable hypergraph. If $\overline{\mathcal{H}}_{\mathcal{L}_{\alpha}}$ is an isolated hypergraph, then it is vertex decomposable. Now, let $\overline{\mathcal{H}}_{\mathcal{L}_{\alpha}}$ is not an isolated hypergraph. Then, by Lemma \ref{L-Constructible}, there exists $i \in[t]$ such that the following condition holds:
    \begin{enumerate}[($\star$)]
    	\item if $F_i\setminus{A_{\alpha}}=\{x_{j_1},\dots,x_{j_b}\}$, then  $\{x_{j_1,f_1},\dots,x_{j_b,f_b}\}$ is an edge in $(\overline{\mathcal{H}}_{\mathcal{L}_{\alpha}})^\circ$ for some 
    	$\mathbf{f} =(f_1,\dots,f_b)$ in $[k_i]^b$ with $f_p > c_m$ if $x_{j_p} = x_{u_m} \in B_{\alpha}\setminus A_{\alpha}$ and $|\mathbf{f}| \le k_i^{(\alpha)}+b-1.$ 
    \end{enumerate}
    Let $A$ be the set of all $i \in [t]$ satisfying condition $(\star).$ If possible, let there exist $i,i'\in A$ with $i' >i$ such that $(F_i\setminus A_{\alpha})\cap(F_{i'}\setminus A_{\alpha})\neq\emptyset.$ Let $x \in(F_i\setminus A_{\alpha})\cap(F_{i'}\setminus A_{\alpha}).$ Then 
    $x \in (F_i\cap G_i)\setminus A_{\alpha}.$ This implies that $i \in U_{\alpha}$, a contradiction. Hence 
    \begin{equation}\label{EQ2}
        (F_i\setminus A_{\alpha})\cap(F_{i'}\setminus A_{\alpha}) =\emptyset 
        \text{ for all } i,i'\in A \text{ with } i'\neq i.
    \end{equation}
    Let $i \in A$ and $F_i\setminus A_{\alpha} =\{x_{j_1}\dots x_{j_b}\}.$ For $p \in [b]$, set 
    $$c'_p =\min\{c:x_{j_p,c} \in \mathcal{V}((\overline{\mathcal{H}}_{\mathcal{L}_{\alpha}})^\circ)\}-1.$$ 
    Now, let
    $\bar{d}_i =\sum_{p \in [b]} c'_p$ and $\bar{k}_i =\max\{0,k^{(\alpha)}_i-\bar{d}_i\}.$ 
    We claim that 
    $(\overline{\mathcal{H}}_{\mathcal{L}_{\alpha}})^\circ \simeq \mathcal{H}'$, where $$\mathcal{H}'=\bigsqcup_{i \in A}(F_i\setminus A_{\alpha})(\bar{k}_i).$$
    Let $\mathbf{f} =(f_1,\dots,f_b) \in [\bar{k}_i]^b.$ Then 
    $$|\mathbf{f}| \le \bar{k}_i+b-1 \text{ if and only if } 
    |\mathbf{f}'| \le k^{(\alpha)}_i+b-1,$$ where $\mathbf{f}'=(f_1+c'_1,\dots,f_b+c'_b)\in [k_i]^b.$ Now, let 
    $E =\{x_{j_1,f_1},\dots,x_{j_b,f_b}\}$ is an edge in $\mathcal{H}'.$ If $x_{j_p}=x_{u_m}\in B_{\alpha}\setminus A_{\alpha}$, then $f'_p\ge c'_p+1>c_m$ since $x_{j_p,c'_p+1}\in \mathcal{V}((\overline{\mathcal{H}}_{\mathcal{L}_{\alpha}})^\circ).$ 
   Thus, we observe that $E' =\{x_{j_1,f'_1},\dots,x_{j_b,f'_b}\}$ is constructible in $\overline{\mathcal{H}}_{\mathcal{L}_{\alpha}}.$ By Lemma \ref{L-Constructible}, there exists an edge $E''$ in $\overline{\mathcal{H}}_{\mathcal{L}_{\alpha}}$ such that $E'' \subset E'.$ Without loss of generality, we may assume that 
   $E'' =\{x_{j_1,f'_1},\dots,x_{j_{b'},f'_{b'}}\}$ for some $b'\le b.$ Again by Lemma \ref{L-Constructible}, there exists $i'\in [t]$ such that $\{x_{j_1},\dots,x_{j_{b'}}\} =F_{i'}\setminus A_{\alpha}$ and 
   $$\mathbf{\bar{f}} =(f'_1,\dots,f'_{b'})\in [k_{i'}]^{b'} \text{ with } |\mathbf{\bar{f}}| \le k^{(\alpha)}_{i'}+b'-1.$$
   If possible, let $b'=1.$ Then $E' =\{x_{j_1,f'_1}\}.$ Since $f'_1 \ge c'_1+1$, it follows from Lemma \ref{L-Constructible} that $\{x_{j_1,c'_1+1}\} \in \mathcal{E}(\overline{\mathcal{H}}_{\mathcal{L}_{\alpha}})$, and hence $x_{j_1,c'_1+1} \notin \mathcal{V}((\overline{\mathcal{H}}_{\mathcal{L}_{\alpha}})^\circ)$, a contradiction. Thus, we have $b'>1$, i.e. $i' \in A.$
   Now, Equation \eqref{EQ2} implies that $i =i'$, i.e. $E' =E''.$ Therefore, $E'$ is an edge in 
   $(\overline{\mathcal{H}}_{\mathcal{L}_{\alpha}})^\circ.$
    Conversely, if $E' =\{x_{j_1,f'_1},\dots,x_{j_b,f'_b}\} \in \mathcal{E}((\overline{\mathcal{H}}_{\mathcal{L}_{\alpha}})^\circ)$, then $\{x_{j_1,f_1},\dots,x_{j_b,f_b}\} \in \mathcal{E}(\mathcal{H}').$ This proves our claim. Now, using Lemma~\ref{L-Simplex}, we can say that hypergraph $\overline{\mathcal{H}}_{\mathcal{L}_{\alpha}}$ is vertex decomposable.
\end{proof}

\begin{exmp}\label{E6}
	Let $\Delta$ be a simplicial tree of Figure \ref{Fig 2}. In Example \ref{E-illustrative}, we have seen that $\overline{\mathcal{H}}_{\mathcal{L}_4}$ is a hypergraph with the edge set
	
	$$\mathcal{E}((\overline{\mathcal{H}}_{\mathcal{L}_4})^\circ)=
	\Bigl\{\{x_{2,1},x_{3,1}\},\{x_{7,1},x_{8,1}\}\Bigr\}.$$

    We observe that $\alpha(\mathcal{L}) =4$ and 
	$(\overline{\mathcal{H}}_{\mathcal{L}_4})^\circ \simeq (F_1\setminus A_4)(1)\sqcup (F_3\setminus A_4)(1).$
\end{exmp}

\begin{lem}\label{L-Alpha}
	With notation as in Set-up \ref{Set-up} and Remark \ref{R-BasicRemark1},
	the hypergraph $\overline{\mathcal{H}}_{\mathcal{L}_s}$ is vertex decomposable for all $s\in \mathbb{N}$ and for all infinite string $\mathcal{L}.$
\end{lem}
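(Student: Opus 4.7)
The plan is to prove vertex decomposability of $\overline{\mathcal{H}}_{\mathcal{L}_s}$ by strong induction on $N = |\mathcal{V}(\overline{\mathcal{H}}_{\mathcal{L}_s})|$, with the induction ranging over all pairs $(\mathcal{L}, s)$ consisting of an arbitrary infinite string $\mathcal{L}$ and $s \in \mathbb{N}$. Whenever $s \ge \alpha(\mathcal{L})$, Lemma \ref{L-Alpha>} gives the conclusion at once; this absorbs every base case. It is therefore enough to handle $s < \alpha(\mathcal{L})$.

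Assume $s < \alpha(\mathcal{L})$, so $U_s \neq \emptyset$, and set $x = x_{u_{s+1},c_{s+1}}$. The choice of $c_{s+1}$ in Set-up \ref{Set-up} ensures $x \in \mathcal{V}((\overline{\mathcal{H}}_{\mathcal{L}_s})^\circ)$, so $x$ is not isolated in $\overline{\mathcal{H}}_{\mathcal{L}_s}$. Define two infinite strings $\mathcal{L}^L$ and $\mathcal{L}^D$ that agree with $\mathcal{L}$ except at position $s+1$, where they take the values $L$ and $D$, respectively. Then the recursion in Set-up \ref{Set-up} gives
$$\overline{\mathcal{H}}_{\mathcal{L}^L_{s+1}} = \overline{\mathcal{H}}_{\mathcal{L}_s}/x \qquad \text{and} \qquad \overline{\mathcal{H}}_{\mathcal{L}^D_{s+1}} = \overline{\mathcal{H}}_{\mathcal{L}_s} \setminus x.$$
Both of these hypergraphs have exactly $N-1$ vertices, and therefore the inductive hypothesis (applied to the modified strings) ensures that they are vertex decomposable. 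This verifies the first half of the definition of vertex decomposability for $\overline{\mathcal{H}}_{\mathcal{L}_s}$ with the candidate shedding vertex $x$.

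For the shedding condition, Lemma \ref{L-SheddingVertex} asserts that $x$ is a shedding vertex of $(\overline{\mathcal{H}}_{\mathcal{L}_s})^\circ$, and what remains is to lift this statement to $\overline{\mathcal{H}}_{\mathcal{L}_s}$. Because the hypergraph is simple, an isolated vertex of $\overline{\mathcal{H}}_{\mathcal{L}_s}$ either carries a singleton edge or lies in no edge; in either case it does not interact with any non-singleton edge, and the non-singleton edges of $\overline{\mathcal{H}}_{\mathcal{L}_s}$ coincide with the edges of $(\overline{\mathcal{H}}_{\mathcal{L}_s})^\circ$. Given an independent set $W$ in $\overline{\mathcal{H}}_{\mathcal{L}_s}/x$, its restriction $W_0 = W \cap \mathcal{V}((\overline{\mathcal{H}}_{\mathcal{L}_s})^\circ)$ is independent in $(\overline{\mathcal{H}}_{\mathcal{L}_s})^\circ/x$, since every singleton-edge vertex is automatically excluded from $W$. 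Applying Lemma \ref{L-SheddingVertex} yields $W_0' \supsetneq W_0$ independent in $(\overline{\mathcal{H}}_{\mathcal{L}_s})^\circ \setminus x$, whereupon $W' = W_0' \cup (W \setminus W_0)$ is independent in $\overline{\mathcal{H}}_{\mathcal{L}_s} \setminus x$ and strictly contains $W$. This completes the inductive step.

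The main technical obstacle is the isolated-vertex bookkeeping in the last paragraph: as seen in Example \ref{E-illustrative}, the construction can and does produce both singleton edges and vertices lying in no edge, so the earlier lemmas phrased in terms of $(\overline{\mathcal{H}}_{\mathcal{L}_s})^\circ$ do not literally apply to $\overline{\mathcal{H}}_{\mathcal{L}_s}$. The split-and-reassemble argument above resolves this cleanly, relying on the fact that singleton edges force their vertex out of every independent set, while isolated vertices lying in no edge act as cone vertices of the independence complex and hence can be freely adjoined when enlarging an independent set.
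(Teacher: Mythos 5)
Your argument is correct and follows essentially the same route as the paper: both express $\overline{\mathcal{H}}_{\mathcal{L}_s}/x_{u_{s+1},c_{s+1}}$ and $\overline{\mathcal{H}}_{\mathcal{L}_s}\setminus x_{u_{s+1},c_{s+1}}$ as $\overline{\mathcal{H}}_{\mathcal{L}'_{s+1}}$ for suitably modified strings $\mathcal{L}'$, absorb the stationary cases $s\ge\alpha(\mathcal{L})$ into Lemma~\ref{L-Alpha>}, and call on Lemma~\ref{L-SheddingVertex} for the shedding condition. The only differences are cosmetic: you induct on the vertex count $|\mathcal{V}(\overline{\mathcal{H}}_{\mathcal{L}_s})|$ (which drops by one under either operation) whereas the paper inducts on $\alpha_0-s$ with $\alpha_0=\max_{\mathcal{L}}\alpha(\mathcal{L})$, and you make explicit the isolated-vertex bookkeeping needed to lift the shedding property from $(\overline{\mathcal{H}}_{\mathcal{L}_s})^\circ$ to $\overline{\mathcal{H}}_{\mathcal{L}_s}$, which the paper handles tacitly by passing to $(-)^\circ$ throughout.
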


\begin{proof}
	Let $\alpha_0=\max\{\alpha(\mathcal{L}):\mathcal{L}~\mbox{is an infinite string}\}.$ In view of Lemma \ref{L-Alpha>}, it is enough to prove that $\overline{\mathcal{H}}_{\mathcal{L}_s}$ is a vertex decomposable hypergraph  for all 
	$s \le \alpha_0$ and for all infinite string $\mathcal{L}.$ We proceed by using induction on $\alpha_0-s.$ The result holds for $s =\alpha_0$ by Lemma \ref{L-Alpha>}. Now, we suppose that $s <\alpha_0$ and
	$\mathcal{L} = P_0P_1 P_2\dots P_s P_{s+1}\dots$ be any infinite string. If $\overline{\mathcal{H}}_{\mathcal{L}_s}=
	\overline{\mathcal{H}}_{\mathcal{L}_{s+1}}$, then the result follows from induction. On the other hand, suppose that $\overline{\mathcal{H}}_{\mathcal{L}_s}\ne
	\overline{\mathcal{H}}_{\mathcal{L}_{s+1}}$
	and $P_{s+1} = D.$ Then 
	$s \in [\alpha(\mathcal{L})-1]$ and 
	$$\left((\overline{\mathcal{H}}_{\mathcal{L}_s})^\circ \setminus {x_{u_{s+1},c_{s+1}}}\right)^\circ = (\overline{\mathcal{H}}_{\mathcal{L}_{s+1}})^\circ.$$ 
	Now, consider the string 
	$\mathcal{L}' = P_0P_1 P_2\dots P_s P'_{s+1}\dots$ with $P'_{s+1} = L.$ We have $\mathcal{L}'_s=\mathcal{L}_s$, and hence $$\left((\overline{\mathcal{H}}_{\mathcal{L}_s})^\circ / x_{u_{s+1},c_{s+1}}
	\right) ^\circ  = (\overline{\mathcal{H}}_{\mathcal{L}'_{s+1}})^\circ.$$
	Thus by induction, $(\overline{\mathcal{H}}_{\mathcal{L}_s})^\circ \setminus{x_{u_{s+1},c_{s+1}}}$
	and $(\overline{\mathcal{H}}_{\mathcal{L}_s})^\circ /{x_{u_{s+1},c_{s+1}}}$
	are vertex decomposable hypergraphs. 
	By Lemma \ref{L-SheddingVertex}, $\overline{\mathcal{H}}_{\mathcal{L}_s}$ is vertex decomposable hypergraph. In similar manner, we can prove the result when $P_{s+1} =L.$
\end{proof}

Now we prove the first main result of the paper.
\begin{proof}[Proof of Theorem \ref{Th-MainTheorem}]
	In view of Lemma \ref{L-Alpha}, the hypergraphs $\mathcal{H}(\mathbf{k}_t)/{x_{u_1,1}}$ and $\mathcal{H}(\mathbf{k}_t) \setminus {x_{u_1,1}}$ are vertex decomposable. By Lemma \ref{L-SheddingVertex}, $x_{u_1,1}$ is a shedding vertex of hypergraph $\mathcal{H}(\mathbf{k}_t).$ Hence $\mathcal{H}(\mathbf{k}_t)$ is a vertex decomposable hypergraph.
\end{proof}

%

\begin{proof}[Proof of Theorem \ref{Th-1stMainResult}]
	Let $\mathcal{H} =\mathcal{H}(\Delta).$ Using \cite[Corollary 1.6]{Herzog2006SGV}, we have $J(\Delta)^{k} = J(\Delta)^{(k)}$, and hence $J(\Delta)^{k} =J(\mathcal{H})^{(k)}.$
	In view of Theorem \ref{Th-MainTheorem}, the hypergraph $\mathcal{H}(\mathbf{k})$ is vertex decomposable, and hence shellable. Now by Lemma \ref{L-Polarisation}, we have $\widetilde{J(\Delta)^{k}} = J(\mathcal{H}(\mathbf{k})).$ Thus by \cite[Theorem 8.2.5]{HBook}, $\widetilde{J(\mathcal{H})^{k}}$ has linear quotients. By \cite[Lemma 3.5]{Fakhari}, $J(\Delta)^{k}$ has linear quotients. Hence by \cite[Theorem 8.2.15]{HBook}, $J(\Delta)^{k}$ is componentwise linear.  
\end{proof}

Let $\Delta$ be a simplicial tree and  $\deg(J(\Delta))$ be the maximum degree of minimal monomial generators of $J(\Delta).$ Then using Theorem \ref{Th-1stMainResult}, we obtain the following result.

\begin{thm}\label{Th-Reg}
	Let $\Delta$ be a simplicial tree. Then 
	$\reg(J(\Delta)^s) = s \deg(J(\Delta))$ for $s \ge 1.$
\end{thm}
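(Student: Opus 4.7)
The plan is to combine Theorem~\ref{Th-1stMainResult} with an elementary minimality argument for generators of $J(\Delta)^s$. For any componentwise linear graded ideal $I$, it is well known (see, e.g., \cite{HBook}) that $\reg(I)$ coincides with the largest degree of a minimal monomial generator of $I$. Since Theorem~\ref{Th-1stMainResult} ensures that $J(\Delta)^s$ has linear quotients, hence is componentwise linear, I would first invoke this fact to reduce the problem to computing the maximum degree of a minimal generator of $J(\Delta)^s$.

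The upper bound is immediate: every minimal generator of $J(\Delta)^s$ can be written as $v_1 \cdots v_s$ for minimal generators $v_1,\dots,v_s$ of $J(\Delta)$, each of degree at most $\deg(J(\Delta))$, so its total degree is at most $s\deg(J(\Delta))$. For the matching lower bound, I would exhibit one minimal generator of $J(\Delta)^s$ of degree exactly $s\deg(J(\Delta))$. Fix a minimal generator $u = \prod_{x \in C} x$ of $J(\Delta)$ of maximum degree, where $C$ is a maximum-cardinality minimal vertex cover of $\mathcal{H}(\Delta)$. I claim that $u^s$ is a minimal generator of $J(\Delta)^s$.

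To verify the claim, suppose $w = v_1 \cdots v_s \in J(\Delta)^s$ divides $u^s$, with each $v_i$ a minimal generator of $J(\Delta)$. Because every $v_i$ is squarefree and $\operatorname{supp}(u^s) = \operatorname{supp}(u)$, the divisibility $v_i \mid u^s$ is equivalent to $v_i \mid u$. Minimality of $u$ as a generator of the squarefree monomial ideal $J(\Delta)$ then forces $v_i = u$ for every $i$, so $w = u^s$. Hence $u^s$ is indeed a minimal generator of $J(\Delta)^s$, of degree $s\deg(J(\Delta))$, which gives the desired lower bound and completes the equality $\reg(J(\Delta)^s) = s\deg(J(\Delta))$.

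I do not foresee a genuine obstacle here: the only subtlety is the squarefree-support observation needed to reduce $v_i \mid u^s$ to $v_i \mid u$, and all the nontrivial structural input has already been supplied by Theorem~\ref{Th-1stMainResult}.
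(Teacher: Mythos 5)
Your proof is correct, and it takes the same route the paper implicitly relies on: the paper states Theorem~\ref{Th-Reg} with no explicit argument, merely remarking that it follows from Theorem~\ref{Th-1stMainResult}, and the natural way to unpack that remark is exactly what you did --- use componentwise linearity (via linear quotients) to conclude that regularity equals the top generator degree, and then show $u^s$ is a minimal generator for $u$ a maximal-degree minimal generator of $J(\Delta)$, using squarefreeness of $u$. Your supporting minimality argument ($v_i \mid u^s$ forces $v_i \mid u$ hence $v_i = u$) is sound and supplies a detail the paper leaves to the reader.
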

We are now in position to prove the second main result of this paper.		
\begin{proof}[Proof of Theorem \ref{Th-2ndMainResult}]
	Equivalence of (i), (ii) and (iii) follows from Theorem \ref{Th-Reg}, and equivalence of (i) and (iv) follows from \cite[Thereom 3]{eagon}. Further, equivalence of (iv), (v) and (vi) follows from \cite[Corollary 7.8]{Faridi05} and \cite[Corollary 8.3]{Faridi05}.
\end{proof}

\noindent
{\bf Acknowledgements.}
The second author acknowledges the financial support from DST, Govt of India under the Start-Up Research Grant (SRG/2021/001442). The second author was also supported by the seed grant received from Indian Institute of Technology, Jammu.

\renewcommand{\bibname}{References}

\renewcommand{\bibname}{References}
\bibliographystyle{plain}  

\bibliography{refs_reg}

\end{document}